\newcommand{\rk}{{\rm rk}}
\newcommand{\Q}{{\mathbb Q}}
\newcommand{\Z}{{\mathbb Z}}
\newcommand{\N}{{\mathbb N}}
\DeclareMathOperator{\tor}{tor}
\newcommand{\cC}{{\mathcal C}}
\newcommand{\cE}{{\mathcal E}}
\newcommand{\cJ}{{\mathcal J}}
\newcommand{\cM}{{\mathcal M}}
\newcommand{\cP}{{\mathcal P}}
\newcommand{\cN}{{\mathcal N}}
\newcommand{\cO}{{\mathcal O}}
\newcommand{\cW}{{\mathcal W}}
\newcommand{\cX}{{\mathcal X}}
\newtheorem{thm}{Theorem}[section]
\newtheorem{cor}[thm]{Corollary}
\newtheorem{lem}[thm]{Lemma}
\newtheorem{question}[thm]{Question}
\newtheorem{definition}[thm]{Definition}
\newtheorem{remark}[thm]{Remark}
\newtheorem{proposition}[thm]{Proposition}
\theoremstyle{definition}
\title[Entropy of algebraic actions]{Some remarks on the entropy for algebraic actions of amenable groups}
\author{Nhan-Phu Chung}
\address{Nhan-Phu Chung, MPI-MIS, Inselstra\ss e 22,
04103 Leipzig, Germany}
\email{chung@mis.mpg.de}
\author{Andreas Thom}
\address{Andreas Thom, Univ.\ Leipzig,
PF 100920, 04009 Leipzig , Germany}
\email{andreas.thom@math.uni-leipzig.de}
\begin{document}

\maketitle

\onehalfspace

\begin{abstract}
In this short note we study the entropy for algebraic actions of certain amenable groups. The possible values for this entropy are studied. Various fundamental results about certain classes of amenable groups are reproved using elementary arguments and the entropy invariant. We provide a natural decomposition of the entropy into summands contributed by individual primes and a summand corresponding to $\infty$. These results extend previous work by Lind and Ward on $p$-adic entropy.
\end{abstract}

\tableofcontents

\section{Introduction}

Let $\Gamma$ be an amenable group.
We denote by $\Z \Gamma$ the integral group ring of $\Gamma$. Let $\cM$ be a countable left $\Z\Gamma$-module. The Pontryagin dual of the underlying abelian group $\cM$ is denoted by $\widehat{\cM}:={\rm hom}_{\Z}(\cM,S^1)$. It is well-known that $\widehat{\cM}$ is a separable compact abelian group. Moreover, the group $\Gamma$ acts on $\widehat{\cM}$ by continuous automorphisms, preserving the Haar measure. Such an action $\Gamma \curvearrowright \widehat \cM$ is called \emph{algebraic action}. In this note we are interested in computations of the entropy of certain algebraic actions of the group $\Gamma$. The study of the entropy of algebraic actions has a long history, dating back to work of Yuzvinski\u{\i} \cite{MR0214726} in the case $\Gamma=\Z$ and later Lind-Schmidt-Ward in the case $\Gamma=\Z^d$, see \cites{MR1062797} and the references therein.
Even though the entropy theory for actions of general amenable groups was developed already in the 1980's \cites{ornsteinweiss}, it took some time until Deninger \cite{Deninger} initiated the study of entropy of algebraic actions of non-commutative amenable groups.

It is well-known that the topological entropy and the measure theoretic entropy with respect to the normalized Haar measure coincide in this case \cite[Theorem 2.2]{Deninger} -- and we denote the common value by
$$\rho(\cM) := h( \Gamma \curvearrowright \widehat \cM) \in [0,\infty].$$
The quantity $\rho(\cM)$ is called \emph{torsion} of $\cM$.

Important computations (with emphasis on principal algebraic actions) were obtained in work of Deninger and Deninger-Schmidt \cites{Deninger, Deninger-Schmidt}. These results were extended in work of Li and the second author \cite{lithom}. Indeed, $\rho(\cM)$ has been computed in the case when $\cM = \Z \Gamma/ \Z \Gamma f$ for any non-zero-divisor $f \in \Z \Gamma$ and more generally if $\cM$ is a $\Z \Gamma$-module of type FL, i.e.\ if $\cM$ admits a finite resolution
$$0 \to \Z \Gamma^{\oplus n_k} \to  \cdots \to \Z \Gamma^{\oplus n_1} \to \Z \Gamma^{\oplus n_0} \to \cM \to 0$$
consisting of finitely generated free modules. This terminology was introduced by Serre, FL comes from {\it finite libre}. It has been shown in \cite{lithom} that
\begin{equation*}
\rho(\Z \Gamma/ \Z \Gamma f) = \log \det\!{}_{\Gamma}(f),
\end{equation*} where $\det\!{}_{\Gamma}(f)$ denotes the Fuglede-Kadison determinant of $f$ -- seen as an element in the group von Neumann algebra $L(\Gamma)$ -- and more generally: \begin{equation} \label{lithom2}
\rho(\cM) = \rho^{(2)}(\cM),
\end{equation} where $\rho^{(2)}(\cM)$ denotes the $\ell^2$-torsion of the $\Z \Gamma$-module $\cM$. See \cite{lithom} for the necessary definitions and references to earlier results in this direction. Note that already $\rho^{(2)}(\cM) \geq 0$ is a non-trivial consequence of this equality.
It is known that $\log \det\!{}_{\Gamma}(f) < \infty$ if and only if $f$ is a non-zero-divisor in $\Z \Gamma$. Similarly, $\rho^{(2)}(\cM)< \infty$ if and only if the Euler characteristic $\chi(\cM) = \sum_{i=0}^k (-1)^i n_i \neq 0.$
The result in Equation \eqref{lithom2} is best possible in the sense that the $\ell^2$-torsion is only defined for $\Z \Gamma$-modules of type FL -- whereas $\rho(\cM)$ makes sense for any $\Z \Gamma$-module.

Topological entropy of algebraic action was also used by Elek in \cite{MR1952628} to define an invariant on finitely generated $\mathbb{F}_2\Gamma$-modules, where $\mathbb{F}_2$ is the field of two elements. In that paper, the integrality of values of such entropies was established when $\Gamma$ is a poly-$\Z$-group. In this note, we extend this result to torsionfree elementary amenable group and relate the computation of entropy (or topological mean dimension for that matter) of algebraic actions to notorious problems such as the integrality of $\ell^2$-Betti numbers and the Zero Divisor Conjecture.

At the same time, we want to study the abstract properties of the assignment $\cM \mapsto \rho(\cM)$ and compute $\rho(\cM)$ -- and hence the entropy for the corresponding algebraic action -- in many cases, which are not covered by the results in \cite{lithom}.
More precisely, motivated by the $p$-adic view point of Lind-Ward \cite{MR961739} on computing entropy of solenoids for $\Z$-actions, we extended their results to all amenable groups satisfying the Zero Divisor Conjecture. This is a new result even for the $\Z^2$-case. Solenoid entropy also has been used to compute the growth of order of the first homology of the $r$-fold cyclic covering branched over a knot \cite{Noguchi}. On the other side, using the Shnirelman integral, Everest et al.\ also established the $p$-adic Mahler measure and its relations to the canonical height of some elliptic curve \cite{Einsiedler, EEW,EF, Everest,EW}. The $p$-adic Mahler measure is also used to illustrate the $p$-component of torsion numbers in knot theory \cite{SW}.

\section{The von Neumann rank and its properties}
In order to complete the picture, we have to note that $\cM \mapsto \rho(\cM)$ behaves very much like a secondary invariant on the category of $\Z \Gamma$-modules. But what is the primary invariant? Let $\Gamma$ be an amenable group and let $\cM$ be a $\Z \Gamma$-module. We set
$${\rm rk}(\cM):= \dim_{L(\Gamma)} \left(L(\Gamma) \otimes_{\Z \Gamma} \cM\right) \in [0,\infty],$$
where $L(\Gamma)$ is the group von Neumann algebra and $\dim_{L(\Gamma)}$ denotes L\"uck's extension of the Murray-von Neumann dimension function, see \cite[Section 6.1]{Luck} for details.
Let us call $\rk(\cM)$ the {\it von Neumann rank} of $\cM$.
We summarize the properties of the assignment $\cM \mapsto {\rm rk}(\cM)$ below:
\begin{enumerate}
\item \label{yuzrank}
If $0\to \cM' \to \cM \to \cM'' \to 0$ is exact, then
$\rk(\cM) = \rk(\cM') + \rk(\cM'')$.
\item If $\cM = {\rm colim}_n \cM_n$ and $\cM'_n := \cM_n/ \left({\cup_{m \geq n} \ker(\cM_n \to \cM_m)} \right)$, then $$\rk(\cM) = \sup_n \rk(\cM'_n) = \lim_{n \to \infty} \rk(\cM'_n).$$
In particular, if $\cM= \oplus_n \cM_n$, then $\rk(\cM) = \sum_n \rk(\cM_n)$.
\item \label{normal}
If $\cP = (\Z \Gamma^{\oplus n})p$ is a finitely projective module and $p^2=p \in M_n(\Z \Gamma)$, then $\rk(\cP)= {\rm tr}(p) \in \N$. In particular, $\rk(\Z \Gamma)=1$.
\item If $\cM \subset \Z \Gamma$ and $\rk(\cM)=0$, then $\cM=0$.
\end{enumerate}

If $\cM$ is of type FL and
$$0 \to \Z \Gamma^{\oplus n_k} \to  \cdots \to \Z \Gamma^{\oplus n_1} \to \Z \Gamma^{\oplus n_0} \to \cM \to 0$$
is a finite resolution of $\cM$, then Properties \eqref{yuzrank} and \eqref{normal} from above imply that
$$\rk(\cM) = \sum_i (-1)^i n_i =\chi(\cM).$$
In particular, $\chi(\cM) \geq 0$ which is already a non-trivial result. Very recently, it has been shown that the von Neumann rank of $\cM$ has also a natural definition in terms of the topological dynamical system $\Gamma \curvearrowright \widehat{\cM}$. It has been shown by Li-Liang \cite{liprep} that $$\rk(\cM) = \dim_{L(\Gamma)}\left( L(\Gamma) \otimes_{\Z \Gamma} \cM\right)={\rm mdim}(\Gamma \curvearrowright \widehat{\cM}),$$ where mdim denotes the mean topological dimension as defined and studied by Gromov \cite{gromov} and Lindenstrauss-Weiss \cite{LW}.

We want to show below that $\cM \to \rho(\cM)$ enjoys similar formal properties and in fact is useful to study the properties of group rings with coefficients in finite fields instead of $\Z$. We can summarize the results in Proposition \ref{colim}, Corollary  \ref{sum}, and Lemma \ref{posent} as follows:
\begin{enumerate}
\item
If $0\to \cM' \to \cM \to \cM'' \to 0$ is exact, then
$\rho(\cM) = \rho(\cM') + \rho(\cM'')$.
\item If $\cM = {\rm colim}_n \cM_n$ and $\cM'_n := \cM_n/ \left({\cup_{m \geq n} \ker(\cM_n \to \cM_m)} \right)$, then $$\rho(\cM) = \sup_n \rho(\cM'_n) = \lim_{n \to \infty} \rho(\cM'_n).$$
In particular, if $\cM= \oplus_n \cM_n$, then $\rho(\cM) = \sum_n \rho(\cM_n)$.
\item \label{normal}
Let $k$ be a finite abelian group. Then, $\rho(k \Gamma)= \log |k|$.
\item If $\cM \subset k \Gamma$ for some finite abelian group $k$ and $\rho(\cM)=0$, then $\cM=0$.
\end{enumerate}

But let us return to coefficients in $\Z$ for a moment. Some tantalizing open problems can be reformulated in terms of the rank-function.

\begin{question} \label{integralrank}
Let $\Gamma$ be a torsionfree amenable group. Is it true that $\rk(\cM) \in \N \cup \{\infty\}$ for every $\Z \Gamma$-module?
\end{question}

As it turns out, a positive answer to this question is equivalent to a positive solution of the Zero Divisor Conjecture for $\Z \Gamma$, see Remark \ref{liZ}. Since $\Gamma$ is amenable, it is also the same as the Atiyah Conjecture, i.e.\ to ask for integrality of $\ell^2$-Betti numbers, see \cite{Schick01} and \cite[Section 10.16]{Luck}.

In order to illustrate the strength of the rank-function, we want to start out with the following easy proof of the Zero Divisor Conjecture for amenable groups of finite cohomological dimension, whose integral group ring is noetherian. The only groups which are known to have a noetherian group ring are polycyclic-by-finite.
If a group $\Gamma$ is polycylic-by-finite, then there exists $\{1\}=\Gamma_0\lhd \Gamma_1...\lhd \Gamma_m=\Gamma$ such that $\Gamma/\Gamma_{m-1}$ is finite and $\Gamma_k/\Gamma_{k-1}\simeq \Z$ for every $k=1,...,m-1$, see \cite[Lemma 10.2.5]{passman}.
If such a group is torsionfree then by Theorem 1 and Proposition 6 of \cite{serre}, one has ${\rm cd}(\Gamma)={\rm cd}(\Gamma_{m-1})<\infty$, where ${\rm cd}$ denotes the cohomological dimension of $\Gamma$, see Definition \ref{cohodef}.

The proof of the following theorem takes its inspiration from \cite{MR0422327}, where the Goldie rank was used in a similar (somehow more involved) argument instead of the von Neumann rank.

\begin{thm}[Farkas-Snider] \label{zdcZ}
Let $\Gamma$ be a amenable group such that
\begin{enumerate}
\item $\Gamma$ has finite cohomological dimension, and
\item the group ring $\Z \Gamma$ is noetherian.
\end{enumerate}
Then, $\Z \Gamma$ does not contain any non-trivial zero divisors.
\end{thm}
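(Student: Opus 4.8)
The plan is to argue by contradiction, following the shape of the Goldie-rank argument of Farkas--Snider but with the von Neumann rank in the role of the reduced rank. Suppose $f,g \in \Z\Gamma$ are both nonzero with $fg = 0$. Right multiplication $r_g \colon \Z\Gamma \to \Z\Gamma$, $x \mapsto xg$, is an endomorphism of $\Z\Gamma$ viewed as a left module, with image the left ideal $\Z\Gamma g$ and kernel the left ideal $K := \{x \in \Z\Gamma : xg = 0\}$. Since $fg=0$ we have $0 \neq f \in K$, so $K \neq 0$; this is the submodule we ultimately force to vanish. Applying $\rk$ to the short exact sequence $0 \to K \to \Z\Gamma \xrightarrow{r_g} \Z\Gamma g \to 0$ and using additivity of the rank together with $\rk(\Z\Gamma)=1$, I obtain the single relation $\rk(K) + \rk(\Z\Gamma g) = 1$.

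The decisive point is to show that $\rk(\Z\Gamma g)$ is a \emph{non-negative integer}. Granting this, note first that $\Z\Gamma g \neq 0$ (as $g = 1\cdot g$ lies in it), so the vanishing criterion for rank-zero submodules of $\Z\Gamma$ forbids $\rk(\Z\Gamma g)=0$; being an integer in $\{0,1\}$, it must equal $1$, whence $\rk(K)=0$, and the same vanishing criterion applied to $K \subseteq \Z\Gamma$ yields $K=0$, contradicting $0 \neq f \in K$. Integrality is exactly where the two hypotheses enter. Finite cohomological dimension of $\Gamma$ means the trivial module admits a finite projective resolution over $\Z\Gamma$, and the standard estimate ${\rm gl.dim}(\Z\Gamma) \leq {\rm cd}(\Gamma) + {\rm gl.dim}(\Z) = {\rm cd}(\Gamma)+1$ shows $\Z\Gamma$ has finite global dimension. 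Since $\Z\Gamma$ is noetherian, the cyclic module $\Z\Gamma g$ therefore admits a finite resolution $0 \to \cP_k \to \cdots \to \cP_0 \to \Z\Gamma g \to 0$ by finitely generated projective modules. Splitting this into short exact sequences and combining additivity with the fact that the rank of a finitely generated projective $\cP_i = (\Z\Gamma^{\oplus n_i})p_i$ equals ${\rm tr}(p_i) \in \N$, I get $\rk(\Z\Gamma g) = \sum_i (-1)^i\, {\rm tr}(p_i) \in \Z$; as a von Neumann dimension it is also $\geq 0$, so $\rk(\Z\Gamma g) \in \N$, as required.

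I expect the genuine work to be concentrated in this integrality step, namely in confirming that finite cohomological dimension forces finite global dimension of $\Z\Gamma$ and that, over the noetherian ring $\Z\Gamma$, finite projective dimension delivers a resolution by finitely generated projectives to which the additivity and trace properties of $\rk$ apply. The remaining ingredients---the endomorphism $r_g$, additivity of the rank, and the vanishing criterion for rank-zero submodules of $\Z\Gamma$ (all of which rely on amenability of $\Gamma$)---are purely formal once integrality is in hand. This is precisely the architecture of the classical proof: integrality of the von Neumann rank on modules of finite projective dimension substitutes for integrality of the Goldie rank, and the vanishing criterion substitutes for the faithfulness of the reduced rank on the prime noetherian ring $\Z\Gamma$.
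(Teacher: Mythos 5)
Your proposal is correct and takes essentially the same approach as the paper: both arguments apply additivity of the von Neumann rank to the short exact sequence $0 \to K \to \Z\Gamma \to \Z\Gamma g \to 0$, deduce integrality of the rank from a finite resolution by finitely generated projectives (using noetherianity together with finite cohomological dimension), and conclude via the faithfulness of $\rk$ on submodules of $\Z\Gamma$. The only cosmetic differences are that the paper resolves the annihilator ideal $\ker(f)$ rather than the image $\Z\Gamma g$, and argues directly for every $f$ rather than by contradiction.
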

\begin{proof}
Let $f \in \Z \Gamma$ be arbitrary.
The $\Z \Gamma$-module $\ker(f) := \{h \in \Z \Gamma \mid hf=0\} \subset \Z \Gamma$ is finitely generated as $\Z \Gamma$ is assumed to be noetherian. Assuming also that the cohomological dimension of $\Gamma$ is finite, there exists a finite resolution of $\ker(f)$ by finitely generated projective modules
$$0 \to \cP_n \to \cP_{n-1} \to \cdots \to \cP_0 \to \ker(f) \to 0.$$
Hence,
${\rm rk}(\ker(f)) = \sum_{k=0}^n (-1)^k{\rm rk}(\cP_k) \in \Z$. On the other hand $\ker(f) \subset \Z \Gamma$, so clearly ${\rm rk}(\ker(f)) \in [0,1]$.
Hence, either $\rk(\ker(f))=0$ and thus $\ker(f)=0$, or $\rk(\ker(f))= 1$ and hence $\rk({\rm im}(f))=0$ for ${\rm im}(f) =\Z \Gamma \cdot f \subset \Z \Gamma$. In the second case, we obtain $f=0$. This proves the claim.
\end{proof}

Note that the Zero Divisor Conjecture with coefficients in a finite field implies the Zero Divisor Conjecture with coefficients in $\Z$ by an easy reduction argument -- but not conversely. Hence, it would be desirable to extend the argument above to cover group rings with more general coefficients, in particular, with coefficients in a finite field. Since $\rk(k \Gamma)=0$ for any finite ring $k$, the role of the von Neumann rank has to be played by the torsion. It is clear that we first have to study the abstract properties of $\cM \mapsto \rho(\cM)$ more systematically.

\section{Properties of the torsion invariant}

In this short section, we want to mention some general results that are useful in the computation of the torsion in various situations. First of all, if $$ 0 \to \cM' \to \cM \to \cM'' \to 0$$
is an exact sequence of $\Z \Gamma$-modules, then Yuzvinski\u{\i}'s additivity formula states
\begin{equation} \label{yuz}
\rho(\cM) = \rho(\cM') + \rho(\cM'').
\end{equation}
A first version of this equation was proved for $\Z$-actions by Yuzvinski\u{\i} in \cite{MR0214726}. In this most general setup, the formula above has been established in seminal work of Li \cite{li}.

Let $\Gamma$ be an amenable group and $M$ be a countable $\Z\Gamma$-module. For any nonempty finite subset $\mathcal E$ of $M$, the function $F\mapsto \log |\sum_{s\in F}s^{-1}{\mathcal E}|$ defined on the set of nonempty finite subsets of $\Gamma$ satisfies the conditions of Ornstein-Weiss lemma \cite[Theorem 6.1]{LW}. Then $ \frac{\log |\sum_{s\in F}s^{-1}{\mathcal E}|}{|F|}$ converges to a real number $c$, denoted by $\lim_F \frac{\log |\sum_{s\in F}s^{-1}{\mathcal E}|}{|F|}$,
when $F$ becomes more and more left invariant in the sense that for any $\varepsilon>0$, there exist a nonempty finite subset $K$ of $\Gamma$ and $\delta>0$ such that for any
nonempty finite subset $F$ of $\Gamma$ satisfying $|KF\setminus F|\le \delta |F|$ one has
$|\frac{\log |\sum_{s\in F}s^{-1}\mathcal E|}{|F|}-c|<\varepsilon.$

The following result of Peters, Theorem 6 in \cite{pet}, gives an intrinsic description of the entropy of an algebraic action.

\begin{thm}[Peters] \label{peters} Let $\Gamma$ be an amenable group and let $\cM$ be a countable $\Z \Gamma$-module.
Then
$$ \rho(\cM)=\sup_{\mathcal E} \lim_F \frac{\log |\sum_{s\in F}s^{-1}{\mathcal E}|}{|F|},$$
where ${\mathcal E}$ ranges over all nonempty finite subsets of $\cM$ and $F \subset \Gamma$ becomes more and more invariant.
\end{thm}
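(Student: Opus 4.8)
The plan is to identify the intrinsic quantity on the right-hand side with the measure-theoretic entropy $h(\Gamma \curvearrowright \widehat{\cM})$ computed with respect to the normalized Haar measure, since we already know this coincides with $\rho(\cM)$. The bridge between the combinatorics on $\cM$ and the dynamics on $\widehat{\cM}$ is Pontryagin duality: a finite subset $\mathcal E \subset \cM$ is dual to a finite partition (or, more precisely, a finite open cover / finite collection of characters) of $\widehat{\cM}$, and the cardinality $|\sum_{s \in F} s^{-1}\mathcal E|$ should count exactly the number of distinct ``$F$-names'' that the dynamical system can realize, i.e.\ the number of atoms in the refined partition $\bigvee_{s \in F} s^{-1} \alpha_{\mathcal E}$. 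I would therefore fix a nonempty finite $\mathcal E \subset \cM$, let $\langle \mathcal E \rangle$ be the subgroup it generates, and analyze the restriction map $\widehat{\cM} \to \widehat{\langle \mathcal E\rangle}$.

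First I would make precise the dual picture. For a finite subset $\mathcal E = \{m_1,\dots,m_r\}$, the evaluation characters $\chi \mapsto (\chi(m_1),\dots,\chi(m_r))$ define a continuous map $\widehat{\cM} \to (S^1)^r$ whose image, together with the $\Gamma$-action, encodes the $F$-refinement. The key identity I would establish is that the number of distinct values of $F$-names,
$$
\bigl|\{(\chi(s^{-1}m))_{s\in F,\, m \in \mathcal E} : \chi \in \widehat{\cM}\}\bigr|,
$$
equals $|\sum_{s\in F} s^{-1}\mathcal E|$ when one works at the level of the finite abelian group generated (so that characters take finitely many values), and in general is controlled by it up to the combinatorics of how the sum-set $\sum_{s\in F}s^{-1}\mathcal E$ sits inside $\cM$. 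Concretely, the subgroup $\langle \sum_{s\in F}s^{-1}\mathcal E\rangle$ determines a finite-dimensional ``window'' of the system, and the logarithmic growth rate of the number of states visible through this window, normalized by $|F|$, is exactly the local contribution to entropy. Taking the supremum over $\mathcal E$ recovers the full entropy because the finite subsets $\mathcal E$ exhaust $\cM$ and hence their duals separate points of $\widehat{\cM}$, so the corresponding partitions generate the Borel $\sigma$-algebra.

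To turn this into the stated formula I would invoke the Ornstein–Weiss lemma, whose hypotheses are verified in the excerpt for the function $F \mapsto \log|\sum_{s\in F}s^{-1}\mathcal E|$; this guarantees the F{\o}lner limit $c = \lim_F |F|^{-1}\log|\sum_{s\in F}s^{-1}\mathcal E|$ exists. For the lower bound $\rho(\cM) \ge \sup_{\mathcal E} c$ I would exhibit, for each $\mathcal E$, a partition of $\widehat{\cM}$ refined by the $\mathcal E$-characters whose $F$-refinement has at least $|\sum_{s\in F}s^{-1}\mathcal E|$ positive-measure atoms that are moreover essentially equidistributed (using that the $\Gamma$-action preserves Haar measure and that $\widehat{\cM}$ is a compact group, so the pushforward of Haar measure on each coset is uniform). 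For the upper bound I would use the fact that finite subsets $\mathcal E$ generate a generating sequence of partitions in the sense of Kolmogorov–Sinai, so that $h = \sup_{\mathcal E} h(\alpha_{\mathcal E})$ and each $h(\alpha_{\mathcal E})$ is bounded by the corresponding growth rate.

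The main obstacle, I expect, is the equidistribution step in the lower bound: one must show that the Haar measure on the compact group $\widehat{\cM}$ genuinely spreads mass over all $|\sum_{s\in F}s^{-1}\mathcal E|$ combinatorially available $F$-names, rather than concentrating on fewer of them. This is where the group structure of $\widehat{\cM}$ is essential — the relevant refined partition is (up to measure-preserving identification) a partition of a compact abelian group into translates of a closed subgroup, on which Haar measure is uniform, so each realized name carries equal mass. Making this uniformity argument rigorous for general $\Z\Gamma$-modules $\cM$ (where $\langle \mathcal E\rangle$ need not be finitely generated as an abelian group and the dual may fail to be a Lie group) is the delicate point, and I would handle it by reducing to the finitely generated subgroup $\langle \sum_{s\in F}s^{-1}\mathcal E\rangle$ for each $F$ and passing to the limit.
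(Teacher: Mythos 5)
The paper does not actually prove this statement: it cites Peters' Theorem 6 (proved for $\Gamma=\Z$) and records that his argument, combined with the Ornstein--Weiss lemma, extends to all countable amenable groups. So the comparison must be made against Peters' original argument, and measured against that your sketch has a genuine gap at its central step. The claimed bridge between the two sides --- that $|\sum_{s\in F}s^{-1}\cE|$ counts the number of distinct $F$-names, so that the sumset cardinality becomes the number of atoms of a refined partition over which Haar measure equidistributes --- is false unless $\cM$ is a torsion module (equivalently, unless $\widehat{\cM}$ is totally disconnected, so that characters take finitely many values). Take $\Gamma=\Z$ acting on $\cM=\Z[\nicefrac{1}{2}]$ by multiplication by $2$, so that $\widehat{\cM}$ is the $2$-adic solenoid, and take $\cE=\{0,1\}$. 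For $F=\{0,\dots,n-1\}$ the sumset has exactly $2^{n}$ elements and the theorem correctly yields entropy $\log 2$; but the set of $F$-names $\{(\chi(s^{-1}m))_{s\in F,\,m\in\cE}:\chi\in\widehat{\cM}\}$ is a continuum for every $F$, since $\widehat{\cM}$ is connected and each evaluation map $\chi\mapsto\chi(m)$, $m\neq 0$, is onto $S^1$. There is no partition of $\widehat{\cM}$ into $|\sum_{s\in F}s^{-1}\cE|$ cosets of a closed subgroup here, and your proposed repair --- reducing to the finitely generated subgroup $\langle\sum_{s\in F}s^{-1}\cE\rangle$ --- does not help: that subgroup is torsion-free, its dual is again connected, and the name count stays infinite. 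The non-torsion case is precisely the substance of Peters' theorem (and the case of solenoids is the one this paper needs); handling it requires a different mechanism, e.g.\ that the elements of $\sum_{s\in F}s^{-1}\cE$ are pairwise distinct characters of $\widehat{\cM}$, hence orthonormal in $L^2$ with respect to Haar measure, so that the sumset cardinality is the dimension of a space of functions measurable over the $F$-window, not a count of atoms.

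Your upper bound is also wrong as stated. You claim $h(\alpha_{\cE})$ is bounded by the sumset growth rate of the same $\cE$; but for $\cM=\Z\Gamma$ and $\cE=\{0,1\}$ (with $1$ the identity of $\Gamma$ inside $\Z\Gamma$) the sumset growth rate is $\log 2$, while $\widehat{\cM}=(S^1)^{\Gamma}$ is the full torus shift: any partition $\alpha_{\cE}$ obtained by cutting the identity coordinate into $k>2$ arcs has $h(\alpha_{\cE})=\log k>\log 2$. The stated identity survives only because the supremum over $\cE$ on the right-hand side also diverges; a correct upper-bound argument must couple the fineness $k$ of the discretization with an enlargement of the test set (comparing against the sumsets of $\{0,e,2e,\dots,(k-1)e\}$ and controlling the error as $k\to\infty$). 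That coupling is the real work in Peters' proof and is absent from your sketch. The parts of your outline that do go through are the framing (measure entropy with respect to Haar measure equals $\rho(\cM)$, by Deninger's theorem) and the use of the Ornstein--Weiss lemma for existence of the limit, which is exactly how the paper sets up the statement.
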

Note that Theorem~\ref{peters} was stated and proved only for the case $\Gamma=\Z$ in \cite{pet}, but the proof there works for general countable amenable groups.
\begin{proposition} \label{colim}
Let $\cM_0 \to \cM_1 \to \cdots$ be a sequence of modules and $\cM = {\rm colim}_n \cM_n$. Then,
$$\rho(\cM) = \lim_{n \to \infty} \rho(\cM'_n) \quad \mbox{with} \quad \cM'_n := \frac{\cM_n}{\bigcup_{m \geq n} \ker(\cM_n \to \cM_m)}.$$ In particular, if the maps $\cM_n \to \cM_{n+1}$ are injective for $n$ large enough, then $$\rho(\cM)= \lim_{n \to \infty} \rho(\cM_n).$$
\end{proposition}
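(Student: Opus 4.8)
The plan is to reduce the computation to an increasing exhaustion of $\cM$ by $\Z\Gamma$-submodules and then to exploit the intrinsic description of the torsion furnished by Peters' theorem (Theorem~\ref{peters}). Write $\psi_n \colon \cM_n \to \cM$ for the canonical map into the colimit and $\phi_{n,m}\colon \cM_n \to \cM_m$ for the structure maps. The first step is the standard identification of kernels in a sequential colimit: an element of $\cM_n$ dies in $\cM$ precisely when it is already killed at some finite stage, so $\ker(\psi_n) = \bigcup_{m \geq n} \ker(\phi_{n,m})$, and therefore $\cM'_n = \cM_n/\ker(\psi_n)$ is canonically isomorphic, as a $\Z\Gamma$-module, to the image $N_n := \psi_n(\cM_n) \subseteq \cM$. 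Since $\psi_n = \psi_{n+1}\circ \phi_{n,n+1}$, the submodules $N_n$ form an increasing chain $N_0 \subseteq N_1 \subseteq \cdots$, and every element of the colimit comes from some $\cM_n$, so $\bigcup_n N_n = \cM$. It thus suffices to prove $\rho(\cM) = \lim_n \rho(N_n)$.

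For the inequality $\lim_n \rho(N_n) \le \rho(\cM)$, I would argue by monotonicity of the supremum in Peters' formula. By Theorem~\ref{peters}, $\rho(N)$ is the supremum of $g(\cE) := \lim_F \frac{\log|\sum_{s\in F} s^{-1}\cE|}{|F|}$ as $\cE$ ranges over the nonempty finite subsets of $N$. The quantity $g(\cE)$ depends only on $\cE$ together with the restriction of the $\Gamma$-action, hence is unchanged when $\cE$ is regarded inside a larger submodule, because the sets $\sum_{s\in F}s^{-1}\cE$ and their cardinalities are computed entirely within any submodule containing $\cE$. Consequently, as the $N_n$ increase, the families of admissible $\cE$ grow, giving $\rho(N_n) \le \rho(N_{n+1}) \le \rho(\cM)$; in particular the sequence $(\rho(N_n))_n$ is non-decreasing in $[0,\infty]$ and its limit is at most $\rho(\cM)$.

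The reverse inequality is where the colimit hypothesis really enters. Given any nonempty finite subset $\cE \subseteq \cM$, each of its finitely many elements lies in some $N_n$, and since the chain is increasing there is a single $n$ with $\cE \subseteq N_n$. Then $g(\cE) \le \rho(N_n) \le \lim_m \rho(N_m)$. Taking the supremum over all nonempty finite $\cE \subseteq \cM$ and invoking Theorem~\ref{peters} once more yields $\rho(\cM) \le \lim_m \rho(N_m)$, and the two inequalities combine to the claimed equality. For the last assertion, if $\cM_n \to \cM_{n+1}$ is injective for all $n \ge N_0$, then every $\phi_{n,m}$ with $m \ge n \ge N_0$ is a composite of injections, hence injective, so $\ker(\psi_n) = 0$ and $\cM'_n = \cM_n$ for $n \ge N_0$; as the limit depends only on the tail, $\rho(\cM) = \lim_n \rho(\cM_n)$.

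The main obstacle, such as it is, lies in justifying that Peters' formula is genuinely local -- that $g(\cE)$ computed for a submodule agrees with its ambient value -- and in the bookkeeping of the colimit kernels; the entropy estimates themselves reduce to elementary monotonicity of suprema once these identifications are in place. One could alternatively deduce the monotonicity $\rho(N_n)\le\rho(\cM)$ from Yuzvinski\u{\i}'s additivity formula \eqref{yuz} applied to $0 \to N_n \to \cM \to \cM/N_n \to 0$, but the supremum argument avoids even that.
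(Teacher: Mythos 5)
Your proof is correct and takes essentially the same route as the paper's: identify each $\cM'_n$ with its image in $\cM$, observe that these images form an increasing chain of submodules whose union is $\cM$, and conclude via Peters' theorem (Theorem~\ref{peters}). The paper compresses all of this into one line, declaring the exhaustion ``clear'' and citing Peters; you have merely supplied the bookkeeping (kernel identification, monotonicity, and the finite-subset argument for the reverse inequality) that the paper omits.
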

\begin{proof}
It is clear that $\cM_n' \subset \cM$ for all $n$ and $\cup_n \cM_n' = \cM$. Now, Theorem \ref{peters} implies the claim.
\end{proof}

\begin{cor} \label{sum}
Let $\cM_0,\cM_1,\dots$ be a sequence of $\Z\Gamma$-modules. Then,
$$\rho\left( \bigoplus_{n \geq 0} \cM_n \right) = \sum_{n=0}^{\infty} \rho(\cM_n).$$
\end{cor}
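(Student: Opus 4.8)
The plan is to realize the infinite direct sum as a colimit of its finite partial sums and then combine Proposition~\ref{colim} with Yuzvinski\u{\i}'s additivity formula~\eqref{yuz}. Set $\cN_n := \bigoplus_{k=0}^{n} \cM_k$ and let $\cN_n \hookrightarrow \cN_{n+1}$ be the obvious inclusion onto the first $n+1$ summands. These maps are injective and their colimit is exactly $\bigoplus_{n \geq 0} \cM_n$. Since the structure maps of the directed system are injective, the ``in particular'' clause of Proposition~\ref{colim} applies directly and gives
$$\rho\Bigl( \bigoplus_{n \geq 0} \cM_n \Bigr) = \lim_{n \to \infty} \rho(\cN_n).$$

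Next I would compute $\rho(\cN_n)$ for each fixed $n$ by induction on $n$. For the inductive step, the splitting
$$0 \to \cM_0 \to \cN_n \to \bigoplus_{k=1}^{n} \cM_k \to 0$$
is a short exact sequence of $\Z\Gamma$-modules, so equation~\eqref{yuz} yields $\rho(\cN_n) = \rho(\cM_0) + \rho\bigl( \bigoplus_{k=1}^{n} \cM_k \bigr)$. Iterating this (or invoking the evident induction hypothesis) gives $\rho(\cN_n) = \sum_{k=0}^{n} \rho(\cM_k)$.

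Combining the two steps, $\rho\bigl( \bigoplus_{n\geq 0}\cM_n \bigr) = \lim_{n\to\infty} \sum_{k=0}^{n} \rho(\cM_k) = \sum_{k=0}^{\infty} \rho(\cM_k)$, which is the assertion. I do not expect a serious obstacle here, since the whole content is already packaged in Proposition~\ref{colim} and~\eqref{yuz}. The only point meriting a moment's care is that all quantities live in $[0,\infty]$: because $\rho(\cM_k) \geq 0$, the partial sums $\sum_{k=0}^{n} \rho(\cM_k)$ form a nondecreasing sequence, so the limit above is simply their supremum, and the identity remains valid as an equality in $[0,\infty]$ even when some $\rho(\cM_k) = \infty$ or the total sum diverges.
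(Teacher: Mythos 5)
Your argument is correct, but it is not the route the paper takes. The paper disposes of this corollary in one line, as a direct consequence of Peters' formula (Theorem~\ref{peters}): every nonempty finite subset $\cE$ of $\bigoplus_{n\geq 0}\cM_n$ lies in a finite partial sum and is contained in a product $\cE_0\times\cdots\times\cE_m$ of finite subsets of the summands, and for such product sets the counting is multiplicative, $\sum_{s\in F}s^{-1}(\cE_0\times\cdots\times\cE_m)=\prod_{k=0}^{m}\bigl(\sum_{s\in F}s^{-1}\cE_k\bigr)$, so the logarithms add and the supremum over $\cE$ splits into the sum of the suprema for the individual $\cM_k$. Your proof instead handles the finite partial sums by invoking Yuzvinski\u{\i}'s additivity formula~\eqref{yuz} for the split exact sequences $0\to\cM_0\to\cN_n\to\bigoplus_{k=1}^{n}\cM_k\to 0$, and then passes to the colimit via Proposition~\ref{colim}; the colimit step is in the paper's spirit (Proposition~\ref{colim} is itself deduced from Peters' theorem), but the finite-sum step imports a genuinely heavy tool, since \eqref{yuz} in this generality is Li's addition formula. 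This is not circular --- Li's theorem does not depend on the corollary --- so your proof stands; what you lose is that the corollary ceases to be elementary (for split sequences one would not need the full addition formula anyway, as entropy of a product action with respect to the product Haar measure is classically additive), and what you gain is a short argument that never touches the combinatorics of Peters' formula.
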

\begin{proof}
This is an easy consequence of Theorem \ref{peters}.
\end{proof}

\section{Values for the entropy of algebraic actions}
\label{hli}
In this section, we study the restrictions on the possible values for the torsion of $\Z \Gamma$-modules, provided that $\Gamma$ is torsionfree and satisfies some form of the Zero Divisor Conjecture. Lemmas \ref{posent} and \ref{L-k zero divsior} are due to Hanfeng Li. We have the kind permission to include them in this paper.

\begin{lem} \label{posent}
Let $k$ be a finite abelian group. If $\cM \subset  k \Gamma$ is a $\Z \Gamma$-module, then either $\cM = \{0\}$ or  $\rho(\cM) >0$. Moreover if $f \in \Z \Gamma$ has support $S \subset \Gamma$, then
$$\rho( k \Gamma \cdot f) \geq \frac{\log |k|}{|S|^2}.$$
\end{lem}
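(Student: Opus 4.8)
The plan is to feed Peters' intrinsic formula (Theorem~\ref{peters}) with a well-chosen finite test set $\cE$, and to reduce the estimate of $\bigl|\sum_{s\in F}s^{-1}\cE\bigr|$ to a counting argument on a large, ``$S$-separated'' subset of the Følner set $F$. The engine of both assertions is a single combinatorial separation lemma, which I would record first: \emph{for any finite $S\subseteq\Gamma$ and any finite $F\subseteq\Gamma$ there is a subset $F'\subseteq F$ with $|F'|\ge |F|/|S|^2$ such that the translates $\{s^{-1}S : s\in F'\}$ are pairwise disjoint.} Indeed, $s^{-1}S$ and $s'^{-1}S$ meet precisely when $s'\in (SS^{-1})s$, so in the ``conflict graph'' on $F$ each vertex has at most $|SS^{-1}|-1\le |S|^2-1$ neighbours; a greedy choice of an independent set then produces $F'$ of the claimed size.

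For the qualitative statement I would pick any nonzero $m\in\cM$, set $T=\mathrm{supp}(m)$, and take $\cE=\{0,m\}$. Each element of $\cE$ is supported on $T$, so $s^{-1}\cE$ is supported on $s^{-1}T$; applying the separation lemma to $T$ gives $F'\subseteq F$ with disjoint supports. Since $0\in\cE$, restriction to the disjoint blocks $s^{-1}T$ shows that the sum over $F'$ is direct, whence
\[
\Bigl|\sum_{s\in F}s^{-1}\cE\Bigr| \ge \Bigl|\sum_{s\in F'}s^{-1}\cE\Bigr| = \prod_{s\in F'}|s^{-1}\cE| = 2^{|F'|} \ge 2^{|F|/|T|^2}.
\]
Feeding this into Theorem~\ref{peters} yields $\rho(\cM)\ge (\log 2)/|T|^2>0$, so $\cM\neq\{0\}$ forces $\rho(\cM)>0$.

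For the quantitative ``moreover'' I would run the same scheme with $\cM=k\Gamma f$ and the test set $\cE=\{\,cf : c\in k\,\}$, where $c\in k$ is regarded as the element of $k\Gamma$ supported at the identity. This $\cE$ is supported on $S$, and it has $|\cE|=|k|$ as soon as $f$ acts injectively on the constants $k$, i.e.\ no nonzero $c\in k$ is annihilated by every coefficient of $f$ (automatic, for instance, when $k$ is a finite field and $f\neq 0$ in $k\Gamma$; in general one gets $\log|\cE|=\log\bigl(|k|/|\mathrm{Ann}_k(f)|\bigr)$ in place of $\log|k|$). The separation lemma applied to $S$ now gives $F'$ with $|F'|\ge |F|/|S|^2$ and disjoint supports $s^{-1}S$, so as above
\[
\Bigl|\sum_{s\in F}s^{-1}\cE\Bigr| \ge |\cE|^{|F'|} = |k|^{|F'|} \ge |k|^{|F|/|S|^2},
\]
and Theorem~\ref{peters} delivers $\rho(k\Gamma f)\ge (\log|k|)/|S|^2$.

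The only real obstacle is the separation lemma; everything else is bookkeeping. Once the supports $s^{-1}S$ (resp.\ $s^{-1}T$) are pairwise disjoint, distinct choices of the coordinates $x_s\in s^{-1}\cE$ produce distinct sums, so the cardinalities simply multiply and no subgroup structure on $\cE$ is needed. I would also stress that the bound $|F'|\ge |F|/|S|^2$ holds for \emph{every} finite $F$, not merely asymptotically; hence the per-$F$ estimate $\log\bigl|\sum_{s\in F}s^{-1}\cE\bigr|/|F|\ge (\log|k|)/|S|^2$ passes to the Følner limit in Theorem~\ref{peters} with no further invariance argument, and the whole proof stays elementary.
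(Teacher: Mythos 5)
Your proof is correct and follows essentially the same route as the paper: the paper likewise feeds Peters' theorem with the test set $\cE = k\cdot f$ and, via a maximal subset $W\subseteq F$ whose translates of the support are pairwise disjoint (so that $F\subseteq\bigcup_{s\in W}SS^{-1}s$ and $|W|\ge |F|/|SS^{-1}|\ge |F|/|S|^2$, the same bound your greedy independent-set argument gives), obtains $\bigl|\sum_{s\in F}s^{-1}\cE\bigr|\ge |k|^{|W|}$ and passes to the F\o lner limit. Your two refinements -- running the argument with $\cE=\{0,m\}$ to make the qualitative statement explicit, and flagging that $|\cE|=|k|$ requires the annihilator of $f$ in $k$ to vanish -- are points the paper's proof leaves implicit or glosses over, but they do not change the method.
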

\begin{proof}
Denote by $S$ the support of $f$. Let $F$ be a nonempty finite subset
of $\Gamma$. Take a maximal subset $W$ of $F$ subject to the condition that the sets $S^{-1}s$ are pairwise disjoint for $s\in W$. Then $F\subseteq \bigcup_{s\in W}SS^{-1}s$, and hence $|W|\ge |F|/|SS^{-1}|$. Set $\cE= k \cdot f$. Since the supports of $s^{-1}f$ are pairwise disjoint for $s\in W$, we
have
$$ |\sum_{s\in F}s^{-1}\cE|\ge |\sum_{s\in W}s^{-1}\cE|=|k|^{|W|}.$$
From Peters' Theorem (i.e.\ Theorem \ref{peters}) we get
$$\rho(k \Gamma \cdot f)\ge \lim_F\frac{\log |\sum_{s\in F}s^{-1}\cE|}{|F|}\ge \liminf_F\frac{\log |k|^{|W|}}{|F|} \geq \frac{ \log|k|}{|SS^{-1}|} \geq \frac{\log |k|}{ |S|^2}>0.$$ This finishes the proof of the lemma.
\end{proof}

We are now ready to extend the argument used in Theorem \ref{zdcZ} to cover also the case of coefficients in a finite field. Again, it applies to the class of torsionfree polycyclic-by-finite groups, see the work of Farkas-Snider \cite{MR0422327} and the references therein.

\begin{thm} \label{zerofin}
Let $k$ be a finite field. Let $\Gamma$ be a amenable group such that
\begin{enumerate}
\item $\Gamma$ has finite cohomological dimension,
\item the group ring $k \Gamma$ is noetherian, and
\item $K_0(k \Gamma)=\Z$.
\end{enumerate}
Then, $k \Gamma$ does not contain any non-trivial zero divisors.
\end{thm}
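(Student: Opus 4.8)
The plan is to run the argument of Theorem~\ref{zdcZ} essentially verbatim, but with the von Neumann rank $\rk$ replaced throughout by the torsion $\rho$. This substitution is forced: since $\rk(k\Gamma)=0$, the rank carries no information over a finite field, whereas $\rho(k\Gamma)=\log|k|>0$. In the $\Z$-case the integrality ``$\rk(\cP)\in\N$'' was automatic, because the Fuglede--Kadison trace of an idempotent in $M_n(\Z\Gamma)$ is the integer obtained as the coefficient of the identity. Over $k$ this coefficient lives in a finite field, so the needed integrality must be supplied externally, and that is precisely the role of the hypothesis $K_0(k\Gamma)=\Z$.

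First I would record that $\rho$ is \emph{quantized} on finitely generated projective modules. Since $\rho$ is additive on direct sums (Corollary~\ref{sum}) and satisfies $\rho(k\Gamma)=\log|k|$, it descends to a well-defined homomorphism $K_0(k\Gamma)\to\R$: if $\cP\oplus(k\Gamma)^{\oplus m}\cong\cP'\oplus(k\Gamma)^{\oplus m}$, then additivity forces $\rho(\cP)=\rho(\cP')$, and $\rho$ is additive on $\oplus$. Using $K_0(k\Gamma)=\Z=\Z\cdot[k\Gamma]$, this homomorphism sends $[\cP]$ to $r\log|k|$, where $r\in\Z$ is the class of $\cP$. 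Hence every finitely generated projective $\cP$ satisfies $\rho(\cP)=r\log|k|$ for some $r\in\Z$.

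Next I would produce a finite projective resolution. As $k\Gamma$ is noetherian, for arbitrary $f\in k\Gamma$ the left ideal $\ker(f)=\{h\in k\Gamma\mid hf=0\}$ is finitely generated. Finite cohomological dimension of $\Gamma$ yields finite global dimension of $k\Gamma$: tensoring a finite free $\Z\Gamma$-resolution of $\Z$ with $k$ over $\Z$ stays exact because the syzygies are free abelian groups, so ${\rm cd}_k\Gamma\le{\rm cd}_\Z\Gamma<\infty$, and for a field $k$ the identification of $\mathrm{Ext}^\bullet_{k\Gamma}$ with group cohomology gives ${\rm gl.dim}(k\Gamma)={\rm cd}_k\Gamma<\infty$. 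Thus $\ker(f)$ admits a finite resolution $0\to\cP_n\to\cdots\to\cP_0\to\ker(f)\to0$ by finitely generated projectives. Splitting it into short exact sequences and applying additivity~\eqref{yuz} together with the previous step gives $\rho(\ker(f))=\sum_i(-1)^i\rho(\cP_i)=N\log|k|$ for some $N\in\Z$.

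Finally I would conclude exactly as in Theorem~\ref{zdcZ}. Applying additivity to the short exact sequence $0\to\ker(f)\to k\Gamma\xrightarrow{\cdot f}k\Gamma f\to0$ of $\Z\Gamma$-modules gives $\log|k|=\rho(k\Gamma)=\rho(\ker(f))+\rho(k\Gamma f)=N\log|k|+\rho(k\Gamma f)$. Since both summands are nonnegative, $N\in\{0,1\}$. If $N=0$ then $\rho(\ker(f))=0$, so $\ker(f)=0$ by Lemma~\ref{posent}; if $N=1$ then $\rho(k\Gamma f)=0$, so $k\Gamma f=0$ and $f=0$, again by Lemma~\ref{posent}. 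Thus for every $f$, either $f=0$ or right multiplication by $f$ is injective, which is precisely the absence of nontrivial zero divisors. I expect the only genuinely delicate point to be the bookkeeping around cohomological dimension---ensuring that finiteness really delivers finite-length finitely generated projective resolutions over $k\Gamma$ rather than merely over $\Z\Gamma$---while the conceptual heart is the quantization $\rho(\cP)\in\Z\cdot\log|k|$ coming from $K_0(k\Gamma)=\Z$.
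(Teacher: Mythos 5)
Your proposal is correct and follows essentially the same route as the paper: a finite resolution of $\ker(f)$ by finitely generated projectives (noetherian plus finite cohomological dimension), quantization $\rho(\cP)\in\log|k|\cdot\Z$ from $K_0(k\Gamma)=\Z$ (the paper phrases this as stable freeness rather than a homomorphism $K_0(k\Gamma)\to\R$, but it is the same computation), and then the dichotomy $\rho(\ker(f))\in\{0,\log|k|\}$ resolved by Lemma~\ref{posent} applied to $\ker(f)$ and to $k\Gamma f$. The extra details you supply---that ${\rm cd}_k(\Gamma)\le{\rm cd}_\Z(\Gamma)$ via $\Z$-flatness of the syzygies and that this bounds ${\rm gl.dim}(k\Gamma)$---are correct justifications of a step the paper simply asserts.
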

\begin{proof}
Let $f \in k \Gamma$ be arbitrary.
The $k \Gamma$-module $\ker(f) := \{h \in k \Gamma \mid hf=0\} \subset k \Gamma$ is finitely generated as $k \Gamma$ is noetherian. As the cohomological dimension of $\Gamma$ is finite, there exists a finite resolution of $\ker(f)$ by finitely generated projective $k \Gamma$-modules
$$0 \to \cP_n \to \cP_{n-1} \to \cdots \to \cP_0 \to \ker(f) \to 0.$$
If $K_0(k \Gamma)=\Z$, then $\cP_l$ is stably free for each $0 \leq l \leq n$. We conclude from this $\rho(\cP_l) \in \log|k| \cdot \N$.
Hence,
${\rm \rho}(\ker(f)) = \sum_{l=0}^n (-1)^l \rho(\cP_l) \in \log|k| \cdot \Z$. On the other hand $\ker(f) \subset k \Gamma$, and so $\rho(\ker(f)) \in [0,\log|k|]$.
Hence, either $\rho(\ker(f))=0$ and thus $\ker(f)=0$ by Lemma \ref{posent}, or $\rho(\ker(f))= \log|k|$ and hence $\rho({\rm im}(f))=0$. Since ${\rm im}(f) = k \Gamma \cdot f \subset k \Gamma$, Lemma \ref{posent} implies $f=0$ in the second case. This proves the claim.
\end{proof}

The preceding result is well-known but was not covered by the results in \cite{MR0422327}. However, the work of Farkas-Snider has been extended to a more general situation as follows:

\begin{thm}[Kropholler-Linnell-Moody, Theorem 1.4 in \cite{MR964842}] \label{eleman}
Let $\Gamma$ be a torsionfree elementary amenable group and let $k$ be a skew field. The group ring $k \Gamma$ does not contain zero-divisors.
\end{thm}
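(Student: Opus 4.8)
The plan is to prove the formally stronger statement that $D * \Gamma$ is a domain for \emph{every} division ring $D$ and every crossed product of $D$ with a torsionfree elementary amenable group $\Gamma$; passing to crossed products is unavoidable, since the inductive step will replace the coefficient field by the division ring of fractions of a smaller group ring. A relation $fg = 0$ in $k\Gamma$ involves only finitely many group elements, so I first replace $\Gamma$ by the finitely generated subgroup they generate, which is again torsionfree elementary amenable; thus I reduce to $\Gamma$ countable. I then run a transfinite induction along Chou's hierarchy $\mathcal{B}_0 \subseteq \mathcal{B}_1 \subseteq \cdots$ exhausting the elementary amenable groups, where $\mathcal{B}_0$ is the class of finite and abelian groups and $\mathcal{B}_{\alpha+1}$ is obtained from $\mathcal{B}_\alpha$ by forming extensions and directed unions. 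In the base case a torsionfree abelian group $A$ is a directed union of free abelian groups $\Z^{n}$, so $D * A$ is a directed union of iterated skew Laurent rings $D * \Z^{n}$, each a domain; and a torsionfree finite group is trivial.

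For the inductive step there are two operations. A directed union $\Gamma = \bigcup_i \Gamma_i$ is immediate, since any two elements of $D * \Gamma$ lie in a single $D * \Gamma_i$, which is a domain by induction. The crucial point enabling the localization below is that $\Gamma$ is \emph{amenable}: I will use the known fact that a crossed product over an amenable group which is a domain automatically satisfies the Ore condition, and hence admits a division ring of fractions; thus ``domain'' upgrades to ``Ore domain'' for free at every stage. For an extension $1 \to N \to \Gamma \to Q \to 1$ with $N, Q \in \mathcal{B}_\alpha$ I write $D * \Gamma = (D * N) * Q$. By induction $D * N$ is a domain, hence an Ore domain with division ring of fractions $D'$ by amenability of $N$; since the $Q$-action is by automorphisms of $D * N$ it permutes the nonzero elements, so this Ore set localizes the whole crossed product and embeds $D * \Gamma$ into $D' * Q$. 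When $Q$ is torsionfree — in particular infinite cyclic, where $D' * Q = D'[t,t^{-1};\sigma]$ is a skew Laurent ring — the induction hypothesis applied to $Q$ shows $D' * Q$ is a domain, and hence so is its subring $D * \Gamma$.

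The main obstacle is precisely the case in which the quotient $Q$ (or an intermediate subquotient) is \emph{finite}, equivalently the passage to a finite-index subgroup; this genuinely occurs for torsionfree groups, for instance for Bieberbach groups $1 \to \Z^{n} \to \Gamma \to F \to 1$ with $F$ finite. Here $D' * Q$ is a finite-dimensional algebra over $D'$ that need not itself be a domain, so the embedding no longer closes the argument, and one must exploit the torsionfreeness of the ambient $\Gamma$ globally rather than the structure of $Q$ in isolation. This is exactly where the new $K$-theoretic induction theorem of Kropholler--Linnell--Moody enters: it controls $K_0$ (equivalently $G_0$) of these crossed products and forces every finitely generated projective to have integral rank, so that the Euler-characteristic argument can be run as in Theorems~\ref{zdcZ} and~\ref{zerofin}. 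Concretely, a nonzero $f$ with nontrivial annihilator would yield a proper nonzero submodule of $D * \Gamma$ whose rank (or torsion, in the finite-coefficient incarnation of Lemma~\ref{posent}) lies strictly between $0$ and that of the rank-one free module, contradicting integrality, with torsionfreeness guaranteeing that the local contribution of each finite subquotient is trivial so the integrality survives the passage through finite quotients. I expect this $K$-theoretic control of projectives over finite-by-(lower hierarchy) extensions — the generalization of the Farkas--Snider idempotent argument of \cite{MR0422327} from the polycyclic-by-finite case to arbitrary torsionfree elementary amenable groups — to be the hard technical heart of the proof.
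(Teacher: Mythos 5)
You should know at the outset that the paper contains \emph{no proof} of Theorem \ref{eleman}: it is quoted as an external result of Kropholler--Linnell--Moody \cite{MR964842}, and the only zero-divisor theorems the paper actually proves (Theorems \ref{zdcZ} and \ref{zerofin}) carry the much stronger hypotheses that the group ring is noetherian and that ${\rm cd}(\Gamma)<\infty$. So your attempt has to be measured against the original argument of \cite{MR964842}, not against anything in this paper. Its skeleton you do reproduce correctly: reduction to finitely generated groups, transfinite induction along the hierarchy, working with crossed products $D*\Gamma$, the automatic upgrade from domain to Ore domain over amenable groups, and the embedding $D*\Gamma \hookrightarrow D'*Q$ attached to an extension $1 \to N \to \Gamma \to Q \to 1$, which disposes of every case in which the relevant quotients are torsionfree.

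The genuine gap is in the one case that carries the whole theorem, the case of finite subquotients, which you defer rather than prove --- and the mechanism you sketch for it would not run. First, ``run the Euler-characteristic argument as in Theorems \ref{zdcZ} and \ref{zerofin}'' is not available: that argument requires a finite projective resolution of the annihilator ideal, hence needs the group ring noetherian and ${\rm cd}(\Gamma)<\infty$, and both fail already for $\Gamma = \Z \wr \Z$, a finitely generated torsionfree metabelian group (so squarely inside the class covered by the theorem) whose group ring is not noetherian and whose cohomological dimension is infinite. In \cite{MR964842} the integrality argument is run not on $D*\Gamma$ at all, but on the noetherian localization $D'*Q$, with the Goldie rank homomorphism on $G_0(D'*Q)$ playing the role that the von Neumann rank and the torsion play in Theorems \ref{zdcZ} and \ref{zerofin}. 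Second, and more fundamentally, your phrase ``torsionfreeness guaranteeing that the local contribution of each finite subquotient is trivial'' conflates subgroups with subquotients, and that conflation is exactly the difficulty. A finite subgroup $H \le Q = \Gamma/N$ does \emph{not} lift to a finite subgroup of $\Gamma$; its preimage is a torsionfree $N$-by-finite group, and the contribution of $H$ to $G_0(D'*Q)$ under Moody's induction theorem is the twisted Artinian crossed product $D'*H$. What must actually be proved is that this twisted crossed product is a division ring. That is genuinely delicate: it is false for the untwisted algebra $D'[\Z/2\Z]$ (which would correspond to torsion in $\Gamma$), and true, for example, for the twisting induced by $1 \to \Z \to \Z \to \Z/2\Z \to 1$, where one gets $\Q(t)[s]/(s^2-t)$, a field. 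In \cite{MR964842} this division-ring property is established by a further induction intertwined with Moody's theorem and the Goldie-rank computation; it --- and not bare integrality of ranks of projectives --- is the technical heart, the genuine extension of the Farkas--Snider argument of \cite{MR0422327}. Since your proposal omits this lemma and substitutes for it an argument that provably cannot run on $D*\Gamma$ itself, it has a real gap at the decisive step.
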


The following lemma provides the link between the Zero Divisor Conjecture and the quantization of the values for the entropy.

\begin{lem} \label{L-k zero divsior}
Let $k$ be a finite field and $\Gamma$ be a countable amenable group. Then the following are equivalent:
\begin{enumerate}
\item For any $k\Gamma$-module $\cM$, treating $\cM$ as  a left $\Z\Gamma$-module, one has that $\rho({\cM})/\log |k|$ is either an integer or $\infty$;

\item For any finitely generated $k\Gamma$-module $\cM$, treating $\cM$ as  a $\Z\Gamma$-module, one has that $\rho({\cM})/\log |k|$ is an integer;

\item For any nonzero left ideal $\cJ$ of $k\Gamma$, treating $k\Gamma/\cJ$ as  a $\Z\Gamma$-module, one has  $\rho({k\Gamma/\cJ})=0$;

\item For any nonzero $f$ in $k\Gamma$, treating $k\Gamma/k\Gamma f$ as  a $\Z\Gamma$-module, one has  $\rho({k\Gamma/k\Gamma f})=0$;

\item $k \Gamma$ contains no zero-divisors.
\end{enumerate}
\end{lem}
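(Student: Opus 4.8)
The plan is to establish the equivalences through a cycle of implications, exploiting the additivity formula \eqref{yuz}, Corollary \ref{sum}, and Lemma \ref{posent}. The easiest moves are the downward specializations: (1)$\Rightarrow$(2) follows because a finitely generated module has finite rank, so $\rho(\cM)/\log|k|$ cannot be $\infty$; and (2)$\Rightarrow$(3), (3)$\Rightarrow$(4) are immediate since $k\Gamma/\cJ$ and $k\Gamma/k\Gamma f$ are cyclic, hence finitely generated, and we must rule out the integer value being positive. For (3)$\Rightarrow$(4) I would simply take $\cJ = k\Gamma f$. The nontrivial content lies in showing that the integrality hypothesis forces the quotient torsion to vanish, and conversely that zero-divisor-freeness propagates integrality to \emph{all} modules.

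First I would prove (4)$\Rightarrow$(5). Suppose $f \in k\Gamma$ is a zero-divisor, so there is a nonzero $g$ with $fg = 0$; then right multiplication gives a nonzero element $g \in k\Gamma$ with $g \in \ker(\,\cdot f)$, equivalently $k\Gamma g$ is a nonzero submodule annihilated appropriately. The cleaner route: if $f$ is a left zero-divisor then $k\Gamma/k\Gamma f$ contains a copy of a nonzero submodule of $k\Gamma$, forcing $\rho(k\Gamma/k\Gamma f) > 0$ by Lemma \ref{posent}, contradicting (4). I would set this up via the exact sequence $0 \to k\Gamma f \to k\Gamma \to k\Gamma/k\Gamma f \to 0$, apply \eqref{yuz} to get $\rho(k\Gamma/k\Gamma f) = \log|k| - \rho(k\Gamma f)$, and note that $f$ being a zero-divisor means the map $x \mapsto xf$ has nonzero kernel, so $k\Gamma f$ is a proper quotient of $k\Gamma$ and $\rho(k\Gamma f) < \log|k|$ by Lemma \ref{posent} applied to the kernel; hence $\rho(k\Gamma/k\Gamma f) > 0$.

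Next, (5)$\Rightarrow$(1) is the crux. Assuming $k\Gamma$ has no zero-divisors, I want $\rho(\cM)/\log|k| \in \N \cup \{\infty\}$ for every $k\Gamma$-module $\cM$. By Corollary \ref{sum} and Proposition \ref{colim}, torsion passes to colimits and direct sums, so it suffices to treat finitely generated $\cM$ and verify integrality is preserved under the limiting operations, since a supremum or sum of elements of $\log|k| \cdot \N$ lands in $\log|k| \cdot (\N \cup \{\infty\})$. For finitely generated $\cM$, I would choose a presentation $k\Gamma^{\oplus m} \to k\Gamma^{\oplus n} \to \cM \to 0$ and argue by induction on the number of generators, repeatedly using \eqref{yuz}: each elementary quotient $k\Gamma/k\Gamma f$ has $\rho = 0$ because zero-divisor-freeness makes $x \mapsto xf$ injective, so $k\Gamma f \cong k\Gamma$ has full torsion $\log|k|$, giving $\rho(k\Gamma/k\Gamma f)=0$. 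Assembling these via additivity along the filtration of $\cM$ by cyclic subquotients yields $\rho(\cM) \in \log|k| \cdot \N$.

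The main obstacle I anticipate is the bookkeeping in (5)$\Rightarrow$(1): reducing an arbitrary module to a controlled filtration whose subquotients are cyclic and then ensuring that the additivity formula \eqref{yuz} can be applied finitely often before passing to the colimit. The delicate point is that $\cM$ need not be finitely generated, so I must first express $\cM$ as a colimit of its finitely generated submodules, apply Proposition \ref{colim} to reduce to the finitely generated case, and only there run the additivity argument. One must also confirm that the absence of zero-divisors indeed yields $k\Gamma f \cong k\Gamma$ as $\Z\Gamma$-modules (not merely abstractly), so that $\rho(k\Gamma f) = \rho(k\Gamma) = \log|k|$; this is where the ring-theoretic hypothesis feeds directly into the entropy computation.
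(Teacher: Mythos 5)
Your plan is correct and follows essentially the same route as the paper's proof: the same cycle of implications, established with the same tools (Proposition \ref{colim} for the colimit reduction in (5)$\Rightarrow$(1), Yuzvinski\u{\i} additivity together with Lemma \ref{posent} to rule out the positive integer value, and the left-annihilator ideal $\cW=\{g\in k\Gamma : gf=0\}$ with $\rho(\cW)=\rho(k\Gamma/k\Gamma f)$ for (4)$\Rightarrow$(5)). The one justification to repair is (1)$\Rightarrow$(2): ``finite rank'' is not the right reason (every $k\Gamma$-module has von Neumann rank $0$, and finite rank never implies finite entropy, as $\Z\Gamma$ itself shows); instead, as in the paper, a module generated by $n$ elements is a quotient of $(k\Gamma)^{\oplus n}$, whose dual action is the full Bernoulli shift with entropy $n\log|k|$, so additivity gives $\rho(\cM)\le n\log|k|<\infty$.
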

\begin{proof}
(2)$\Rightarrow$(1): Proposition \ref{colim} says that for any $\Z\Gamma$-module $\cM$, which is the union of an increasing sequence $\{\cM_n\}_{n\in \N}$ of sub-$\Z\Gamma$-modules, one has
$\rho({\cM})=\lim_{n\to \infty}\rho({\cM_n})$.

(1)$\Rightarrow$(2): For any $n\in \N$, the natural action of $\Gamma$ on $\widehat{(k\Gamma)^n}$ is the full Bernoulli shift with $|k|^n$ symbols, and thus has topological entropy $n\log |k|$. It follows that for any finitely generated left $k\Gamma$-module $\cM$, one has $\rho({\cM})<\infty$.

(3)$\Rightarrow$(2): From the Yuzvinski\u{\i} additivity formula, for any $\Z\Gamma$-module $\cM$ and any sub-$\Z\Gamma$-module $\cM'$, one has
$\rho({\cM})=\rho({\cM'})+\rho({\cM/\cM'})$. Then by induction on the number of generators we get the conclusion.

(2)$\Rightarrow$(3): Let $\cJ$ be a non-zero left ideal of $k\Gamma$. Take a non-zero $f$ in $\cJ$. Since $k \Gamma f \subset \cJ$, it follows from Lemma \ref{posent}, that $\rho(\cJ)>0$.
From the Yuzvinski\u{\i} additivity formula, we have
$$ \rho({\cJ})+\rho({k\Gamma/\cJ})= \rho({k\Gamma})=\log |k|.$$ Hence, we conclude $\rho({k\Gamma/\cJ})=0$.

(4)$\Rightarrow$(3): Let $\cJ$ be a non-zero left ideal of $k\Gamma$. Take a nonzero $f$ in $\cJ$. Then $\rho({k\Gamma/k\Gamma f})\ge \rho({k\Gamma/\cJ})$.

(3)$\Rightarrow$(4) is trivial.

(4)$\Rightarrow$(5): Let $f\in k\Gamma$ be non-zero. Denote  by $\cW$ the left ideal of $k\Gamma$ consisting of $g\in k\Gamma$ satisfying $gf=0$.
From the Yuzvinski\u{\i} additivity formula we have
$$ \rho({\cW})+\rho({k\Gamma f})=\rho({k\Gamma})<\infty$$
and
$$ \rho({k\Gamma f})+\rho({k\Gamma/k\Gamma f})=\rho({k\Gamma})<\infty.$$
Thus $\rho({\cW})=\rho({k\Gamma/k\Gamma f})$. Assuming (4), we get $\rho({\cW})=0$, and hence $\cW=0$.

(5)$\Rightarrow$(4): Assuming (5), we get $\rho({k\Gamma/k\Gamma f})=0$.
\end{proof}

\begin{remark} \label{liZ}
It is clear that an identical argument (using the von Neumann rank instead of the torsion) shows that Question \ref{integralrank} is indeed equivalent to the Zero Divisor Conjecture with coefficients in $\Z$.
\end{remark}

Let us record the following corollary of Lemma \ref{L-k zero divsior} and Theorem \ref{eleman}.

\begin{cor} \label{C-elek} Let $\Gamma$ be a torsionfree elementary amenable group and let $\cM$ be a left $\Z \Gamma$-module such that $p \cM = \{pm \mid m \in \cM\}=0$ for some prime $p$. Then, $\rho({\cM} )$ is a multiple of $\log(p)$ or $\infty$.
\end{cor}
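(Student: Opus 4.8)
The plan is to reduce the statement about the $\Z\Gamma$-module $\cM$ to a statement about a $k\Gamma$-module and then invoke the equivalence established in Lemma \ref{L-k zero divsior} together with the zero-divisor result of Theorem \ref{eleman}. The key observation is that the hypothesis $p\cM = 0$ means precisely that $\cM$ is annihilated by $p$, so the underlying abelian group of $\cM$ is an $\F_p$-vector space and the $\Z\Gamma$-action factors through the quotient $\Z\Gamma \to \F_p\Gamma$. In other words, $\cM$ is naturally a module over the group ring $k\Gamma$ with $k = \F_p$, and the $\Z\Gamma$-module structure we are taking $\rho$ of is exactly the one obtained by restricting this $k\Gamma$-structure along $\Z\Gamma \to k\Gamma$. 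This is the viewpoint already used implicitly throughout Section 4.

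Having made this identification, I would argue as follows. Since $\Gamma$ is a torsionfree elementary amenable group and $k = \F_p$ is a (finite) field, hence a skew field, Theorem \ref{eleman} (Kropholler-Linnell-Moody) applies and tells us that $k\Gamma$ contains no zero-divisors. This is precisely condition (5) of Lemma \ref{L-k zero divsior}. The lemma is stated for countable amenable $\Gamma$, which is satisfied here. By the equivalence (5)$\Leftrightarrow$(1) of that lemma, condition (5) implies that for any $k\Gamma$-module $\cM$, viewed as a left $\Z\Gamma$-module, the quantity $\rho(\cM)/\log|k|$ is either an integer or $\infty$.

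Finally I would translate this back to the desired conclusion. Here $|k| = |\F_p| = p$, so $\log|k| = \log p$, and the statement $\rho(\cM)/\log p \in \N \cup \{\infty\}$ says exactly that $\rho(\cM)$ is a nonnegative integer multiple of $\log p$ or equal to $\infty$. This is the claim.

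I do not expect any serious obstacle, since the corollary is genuinely a formal consequence of the two quoted results; the only point requiring a moment's care is the verification that the $\Z\Gamma$-module structure entering the definition of $\rho$ agrees with the restriction of the $k\Gamma$-structure, which is immediate from $p\cM = 0$, and the observation that a finite field is in particular a skew field so that Theorem \ref{eleman} is applicable.
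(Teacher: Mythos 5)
Your proposal is correct and is exactly the paper's intended argument: the paper records this corollary as an immediate consequence of Lemma \ref{L-k zero divsior} and Theorem \ref{eleman}, namely viewing $\cM$ as an $\F_p\Gamma$-module, invoking Kropholler--Linnell--Moody for the absence of zero-divisors in $\F_p\Gamma$, and applying the implication (5)$\Rightarrow$(1) of the lemma. Your additional care about the identification of module structures and the countability hypothesis matches the paper's implicit conventions, so there is nothing to correct.
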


We conjecture that the preceding corollary holds for all torsionfree amenable groups -- in view of Lemma \ref{L-k zero divsior} this is equivalent to the Zero Divisor Conjecture.
Elek obtained the preceding result for poly-$\Z$ groups in \cite[Proposition 11.2]{MR1952628}, compare also \cite{Elek03a} and Theorem \ref{zerofin}.

\begin{question} \label{conj}
Let $\Gamma$ be a torsionfree amenable group. Is there a constant $c>0$, such that
$\rho(\cM)>c$ for every left $\Z \Gamma$-module $\cM$ unless $\rho({\cM})=0$?
\end{question}

Note that a positive answer to this question has some formal analogy with Question \ref{integralrank} and hence Atiyah's conjecture about the integrality of $\ell^2$-Betti numbers for torsionfree groups, see for example \cite{Schick01} for details. However, for $G=\Z$, Question \ref{conj} is equivalent to Lehmer's famous question about the minimal Mahler measure of a polynomial with integer coefficients and no cyclotomic factors. The case $G=\Z^d$ can be reduced to the case $\Z$ by results of Lawton, \cite{lawton}.

\section{The decomposition into primary components}

From now on we write $\mu_p := \Z[\frac1p]/\Z$. Note that there is an exact sequence
$$0 \to \Z \to \Z[\nicefrac1p] \to \mu_p \to 0,$$
which induces an exact sequence
\begin{equation} \label{seq1}
0 \to {\rm Tor}(\mu_p,\cM) \to \cM \to \Z[\nicefrac1p] \otimes_{\Z} \cM \to \mu_p \otimes_{\Z} \cM \to 0,
\end{equation}
for any abelian group $\cM$. Here, we have ${\rm Tor}(\mu_p,\cM) = \{x \in \cM \mid \exists k \in \N\ p^k x = 0 \}.$ If $\cM$ is a $\Z \Gamma$-module
satisfying $\rho(\cM)< \infty$, we set $$\rho_p(\cM):= \rho({\rm Tor}(\mu_p,\cM)) - \rho(\mu_p \otimes_{\Z} \cM).$$
If $\rho_p(\cM)$ is defined, we get from Yuzvinski\u{\i}'s additivity formula \eqref{yuz} and the exact sequence in \eqref{seq1} that
\begin{equation} \label{eqp}
\rho(\cM) = \rho(\Z[\nicefrac1p] \otimes_{\Z} \cM) + \rho_p(\cM).
\end{equation}
\begin{lem} \label{positive}
For any $\Z \Gamma$-module $\cM$ such that $\rho_p(\cM)$ is defined, we have $\rho_p(\cM) \geq 0.$ Moreover, if $\cM$ does not contain $p$-torsion, then $\rho_p(\cM)=0$.
\end{lem}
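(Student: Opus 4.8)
The plan is to analyze the two summands $\rho(\mathrm{Tor}(\mu_p,\cM))$ and $\rho(\mu_p \otimes_{\Z} \cM)$ separately, keeping in mind the defining relation $\rho_p(\cM) = \rho(\mathrm{Tor}(\mu_p,\cM)) - \rho(\mu_p \otimes_{\Z} \cM)$. The key observation I would exploit is that, as abelian groups, both of these are $p$-groups: every element of $\mathrm{Tor}(\mu_p,\cM)$ is killed by some power of $p$ by definition, and $\mu_p \otimes_{\Z} \cM$ is a quotient of $\mu_p \otimes \cM$, so it is a module over $\mu_p = \Z[\frac1p]/\Z$ and hence every element is $p$-power torsion. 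Thus both modules carry a natural filtration by the kernels of multiplication by $p^k$, and I expect to set up a comparison between the $k$-th and $(k+1)$-st layers of these filtrations.

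First I would make precise the multiplication-by-$p$ map. On $\mathrm{Tor}(\mu_p,\cM)$ the map $x \mapsto px$ is a $\Z\Gamma$-module endomorphism whose kernel is the $p$-torsion and whose image is again inside $\mathrm{Tor}(\mu_p,\cM)$. The crucial point connecting the two summands is that multiplication by $p$ on $\mu_p$ is surjective with kernel $\tfrac1p\Z/\Z \cong \Z/p$, which upon tensoring gives a relation between $\mu_p \otimes_\Z \cM$ and the $p$-torsion layer of $\cM$. I would aim to produce a short exact sequence of $\Z\Gamma$-modules relating $\mu_p \otimes_\Z \cM$ to a sub- or quotient-module of $\mathrm{Tor}(\mu_p,\cM)$, then apply Yuzvinski\u{\i}'s additivity \eqref{yuz} together with the nonnegativity of $\rho$ (which is immediate from Peters' Theorem~\ref{peters}, since the defining supremum is a supremum of nonnegative quantities) to conclude $\rho(\mathrm{Tor}(\mu_p,\cM)) \geq \rho(\mu_p \otimes_\Z \cM)$, i.e.\ $\rho_p(\cM) \geq 0$.

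For the ``moreover'' clause, if $\cM$ has no $p$-torsion then $\mathrm{Tor}(\mu_p,\cM) = \{0\}$, so $\rho(\mathrm{Tor}(\mu_p,\cM)) = 0$; I would then need to see that $\rho(\mu_p \otimes_\Z \cM) = 0$ as well. Here I would use that no $p$-torsion means multiplication by $p$ is injective on $\cM$, hence on $\Z[\nicefrac1p]\otimes_\Z\cM$ it is an isomorphism, and I would argue that the cokernel term $\mu_p \otimes_\Z \cM$ of the sequence \eqref{seq1} must then vanish entirely (since $\cM \to \Z[\nicefrac1p]\otimes_\Z\cM$ becomes an isomorphism after the $p$-torsion, already zero, is removed), forcing $\rho(\mu_p \otimes_\Z \cM)=0$.

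The main obstacle I anticipate is the first inequality: constructing the precise short exact sequence that lets additivity bound $\rho(\mu_p \otimes_\Z \cM)$ by $\rho(\mathrm{Tor}(\mu_p,\cM))$. The naive guess is that $\mu_p \otimes_\Z \cM$ should be a \emph{quotient} of the $p$-torsion built from the layers of the filtration, but one must track carefully how the tensor product interacts with multiplication by $p$ and confirm that the relevant maps are $\Z\Gamma$-linear, not merely $\Z$-linear, so that $\rho$ and its additivity genuinely apply. Once the correct exact sequence is in hand, the nonnegativity of $\rho$ finishes the argument mechanically.
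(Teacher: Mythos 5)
Your plan has a genuine gap at its central step, in both halves. For the inequality $\rho_p(\cM)\geq 0$ you propose to produce a short exact sequence of $\Z\Gamma$-modules exhibiting $\mu_p\otimes_{\Z}\cM$ as (essentially) a sub- or quotient-module of ${\rm Tor}(\mu_p,\cM)$, and then to invoke additivity and nonnegativity of $\rho$. No such sequence exists: already for $\cM=\Z$ with trivial $\Gamma$-action (take $\Gamma$ infinite, so $\rho(\cM)=0$ and $\rho_p(\cM)$ is defined) one has ${\rm Tor}(\mu_p,\cM)=0$ while $\mu_p\otimes_{\Z}\cM=\mu_p\neq 0$, so $\mu_p\otimes_{\Z}\cM$ would have to be a subquotient of the zero module. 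More decisively, the inequality $\rho({\rm Tor}(\mu_p,\cM))\geq\rho(\mu_p\otimes_{\Z}\cM)$ that you aim to prove by comparing the two summands in isolation is \emph{false} without the standing hypothesis $\rho(\cM)<\infty$: for $\cM=\Z\Gamma$ one has ${\rm Tor}(\mu_p,\Z\Gamma)=0$, whereas $\mu_p\otimes_{\Z}\Z\Gamma=\mu_p\Gamma$ contains $(\Z/p^k\Z)\Gamma$ for every $k$ and hence $\rho(\mu_p\Gamma)=\infty$. Since any algebraic relation between the two summands would be blind to whether $\rho(\cM)$ is finite, a correct proof must route the comparison through $\cM$ itself. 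This is what the paper does: it identifies $\Z[\nicefrac1p]\otimes_{\Z}\cM$ with the colimit of $\cM\stackrel{p}{\to}\cM\stackrel{p}{\to}\cdots$, so that Proposition \ref{colim} expresses $\rho(\Z[\nicefrac1p]\otimes_{\Z}\cM)$ as a limit of torsions of quotients of $\cM$, giving $\rho(\Z[\nicefrac1p]\otimes_{\Z}\cM)\leq\rho(\cM)$; Equation \eqref{eqp} then yields $\rho_p(\cM)=\rho(\cM)-\rho(\Z[\nicefrac1p]\otimes_{\Z}\cM)\geq 0$, the subtraction being legitimate precisely because $\rho(\cM)<\infty$.

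The ``moreover'' clause of your proposal is also incorrect as written: $p$-torsion-freeness of $\cM$ does not make $\mu_p\otimes_{\Z}\cM$ vanish. Injectivity of $\cM\to\Z[\nicefrac1p]\otimes_{\Z}\cM$ says nothing about surjectivity; for $\cM=\Z$ the cokernel is $\mu_p\neq 0$. What saves the statement is that one needs only the \emph{torsion} of that module to vanish, not the module: if ${\rm Tor}(\mu_p,\cM)=0$, then by definition $\rho_p(\cM)=-\rho(\mu_p\otimes_{\Z}\cM)\leq 0$, and combining this with the first part forces $\rho_p(\cM)=0$ (hence $\rho(\mu_p\otimes_{\Z}\cM)=0$, even though $\mu_p\otimes_{\Z}\cM$ itself may be large). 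Incidentally, your filtration idea can be repaired, but only by the same detour through $\cM$: when $\rho(\cM)<\infty$, two applications of \eqref{yuz} to multiplication by $p^k$ on $\cM$ give $\rho(\ker(p^k\colon\cM\to\cM))=\rho(\cM/p^k\cM)$, and applying Proposition \ref{colim} to ${\rm Tor}(\mu_p,\cM)=\bigcup_k\ker(p^k)$ and to $\mu_p\otimes_{\Z}\cM={\rm colim}_k\,\cM/p^k\cM$ then gives $\rho(\mu_p\otimes_{\Z}\cM)\leq\rho({\rm Tor}(\mu_p,\cM))$; but the equality of layers used here is exactly the appeal to finiteness of $\rho(\cM)$ that your direct comparison omits.
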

\begin{proof}
We have $$\Z[\nicefrac1p] \otimes_{\Z} \cM = {\rm colim}_n (\cM \stackrel{p}{\to} \cM \stackrel{p}{\to} \cdots ).$$ Now, Proposition \ref{colim} implies that $\rho(\Z[\nicefrac1p] \otimes_{\Z} \cM) \leq \rho(\cM)$ and hence $\rho_p(\cM)\geq0$ from \eqref{eqp}. If $\cM$ does not contain $p$-torsion, then ${\rm Tor}(\mu_p,\cM)=0$ and hence $\rho_p(\cM)=0$. This finishes the proof.
\end{proof}

In anology to the finite places, we set $\rho_{\infty}(\cM) = \rho(\Q \otimes_{\Z} \cM)$ for any $\Z \Gamma$-module $\cM$. Our main observation is now:

\begin{thm} \label{main1}
Let $\cM$ be a $\Z \Gamma$-module with finite torsion. Then, we have
\begin{equation}
\rho(\cM) = \rho_{\infty}(\cM) + \sum_{p} \rho_p(\cM).
\end{equation}
Moreover, for any exact sequence $0 \to \cM' \to \cM \to \cM'' \to 0$ of $\Z \Gamma$-modules with finite torsion, we have $\rho_p(\cM)= \rho_p(\cM') + \rho_p(\cM'')$ for any prime $p$, and $\rho_\infty(\cM)= \rho_\infty(\cM') + \rho_\infty(\cM'')$.
\end{thm}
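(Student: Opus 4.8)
The proof splits naturally into two parts: establishing the main formula, and then verifying the additivity of the individual summands. I would begin with the formula itself. The key identity to exploit is Equation \eqref{eqp}, which already tells us that for each prime $p$,
\begin{equation*}
\rho(\cM) = \rho(\Z[\nicefrac1p] \otimes_{\Z} \cM) + \rho_p(\cM).
\end{equation*}
The idea is to iterate this decomposition over all primes simultaneously. Localizing away from \emph{all} primes gives $\Q \otimes_{\Z} \cM = {\rm colim}\,\Z[\nicefrac1p]\otimes_{\Z}\cM$ over the partially ordered set of finite sets of primes (with transition maps given by further divisions), and $\rho_\infty(\cM) = \rho(\Q\otimes_\Z\cM)$ by definition. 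So the main analytic task is to show that the ``defect'' between $\rho(\cM)$ and $\rho_\infty(\cM)$ is exactly the sum of the local defects $\rho_p(\cM)$.

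Concretely, I would proceed as follows. For a finite set $P$ of primes, let $\Z_P := \Z[\{\nicefrac1p : p \in P\}]$ and consider $\Z_P \otimes_\Z \cM$. An induction on $|P|$, using \eqref{eqp} applied one prime at a time (at each stage localizing the module $\Z_{P\setminus\{p\}}\otimes_\Z\cM$ further at $p$), should yield
\begin{equation*}
\rho(\cM) = \rho(\Z_P \otimes_{\Z} \cM) + \sum_{p \in P} \rho_p(\cM),
\end{equation*}
\emph{provided} the per-prime defects are stable under the intermediate localizations. The subtlety here is that $\rho_p$ was defined on $\cM$ itself, not on $\Z_{P\setminus\{p\}}\otimes_\Z\cM$; I would need that inverting primes $q \neq p$ does not alter the $p$-primary torsion contribution, which is plausible because inverting $q$ is an isomorphism on $p$-torsion. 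Then I would take the colimit over $P$ along the system of all finite sets of primes, invoking Proposition \ref{colim} to get $\rho(\Z_P\otimes_\Z\cM) \to \rho(\Q\otimes_\Z\cM) = \rho_\infty(\cM)$, and conclude $\rho(\cM) = \rho_\infty(\cM) + \sum_p \rho_p(\cM)$. The finite-torsion hypothesis on $\cM$ guarantees each term is finite so that rearranging the equation is legitimate.

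For the additivity statement, the plan is to reduce everything to Yuzvinski\u\i's formula \eqref{yuz} together with the behavior of the functors ${\rm Tor}(\mu_p, -)$, $\Z[\nicefrac1p]\otimes_\Z -$, and $\mu_p\otimes_\Z -$ on short exact sequences. Given $0 \to \cM' \to \cM \to \cM'' \to 0$, applying these functors produces long exact sequences; since $\Z[\nicefrac1p]$ is flat, $\Z[\nicefrac1p]\otimes_\Z -$ is exact and additivity of $\rho_\infty$ (taking $p$ to run over nothing, i.e.\ tensoring with $\Q$, which is flat) follows directly from \eqref{yuz}. For $\rho_p$, the definition $\rho_p(\cM) = \rho({\rm Tor}(\mu_p,\cM)) - \rho(\mu_p\otimes_\Z\cM)$ together with the six-term exact sequence relating ${\rm Tor}(\mu_p,-)$ and $\mu_p\otimes_\Z -$ for the three modules should, after applying \eqref{yuz} to each piece and taking an alternating sum, telescope to the desired relation $\rho_p(\cM) = \rho_p(\cM') + \rho_p(\cM'')$. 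The finite-torsion hypothesis is again used to ensure all the $\rho$-values appearing are finite, so the alternating sum is unambiguous.

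The step I expect to be the main obstacle is the bookkeeping in the simultaneous localization, specifically justifying that $\rho_p$ computed after inverting the other primes agrees with $\rho_p(\cM)$ as originally defined. One must check carefully that ${\rm Tor}(\mu_p, \Z_{P\setminus\{p\}}\otimes_\Z\cM) \cong {\rm Tor}(\mu_p,\cM)$ and similarly for the $\mu_p$-tensor term, which amounts to the fact that the $p$-primary and $p$-divisible parts are unaffected by inverting primes coprime to $p$. A cleaner alternative, which I would fall back on if the induction proves awkward, is to prove the additivity statement \emph{first} and then deduce the decomposition formula by applying additivity to a well-chosen filtration of $\cM$ by its torsion and torsion-free quotients; but I suspect the direct localization argument is the intended route since \eqref{eqp} is set up precisely for it.
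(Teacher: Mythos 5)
Your plan splits into two halves, and they fare differently. The additivity half is essentially correct: for $\rho_\infty$ you argue exactly as the paper does (flatness of $\Q$ plus \eqref{yuz}), and for $\rho_p$ your six-term Tor sequence argument is a valid alternative --- all six terms have finite $\rho$ under the finite-torsion hypotheses, so the alternating sum telescopes. The paper gets $\rho_p$-additivity slightly more cheaply: since $\Z[\nicefrac1p]\otimes_\Z-$ is exact, $\cM\mapsto\rho(\Z[\nicefrac1p]\otimes_\Z\cM)$ is additive by \eqref{yuz}, and then \eqref{eqp} exhibits $\rho_p$ as a difference of two additive functionals. For the decomposition formula itself, however, the paper does not localize one prime at a time: it tensors the single exact sequence $0\to\Z\to\Q\to\bigoplus_p\mu_p\to0$ with $\cM$ to obtain
\begin{equation*}
0 \to \bigoplus_p {\rm Tor}(\mu_p,\cM) \to \cM \to \Q\otimes_\Z\cM \to \bigoplus_p\left(\mu_p\otimes_\Z\cM\right) \to 0,
\end{equation*}
and then applies \eqref{yuz} and Corollary \ref{sum}; all primes are handled simultaneously, with no induction and no colimit over sets of primes.

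The genuine gap in your route is the colimit step. Proposition \ref{colim} does \emph{not} give $\rho(\Z_{P_n}\otimes_\Z\cM)\to\rho(\Q\otimes_\Z\cM)$ as you assert: the transition maps $\Z_{P_n}\otimes_\Z\cM\to\Z_{P_{n+1}}\otimes_\Z\cM$ are not injective (the kernel is the $p_{n+1}$-primary torsion, by \eqref{seq1}), so the proposition only yields $\rho(\cM'_n)\to\rho(\Q\otimes_\Z\cM)$, where $\cM'_n$ is the quotient of $\Z_{P_n}\otimes_\Z\cM$ by the eventual kernel $K_n$, which here is the entire torsion subgroup $K_n\cong\bigoplus_{q\notin P_n}{\rm Tor}(\mu_q,\cM)$. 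You therefore still owe the statement $\rho(K_n)\to0$. This is true, but proving it requires precisely the ingredient your detour was meant to avoid: by Corollary \ref{sum}, $\rho(K_n)=\sum_{q\notin P_n}\rho({\rm Tor}(\mu_q,\cM))$, which is the tail of the convergent series $\sum_q\rho({\rm Tor}(\mu_q,\cM))=\rho({\rm tor}(\cM))\le\rho(\cM)<\infty$ (the inequality by \eqref{yuz}). With that patch --- and your other flagged subtlety is indeed fine: $\rho_q(\Z_P\otimes_\Z\cM)=\rho_q(\cM)$ for $q\notin P$, since $\Z_P$ is flat and primes in $P$ act bijectively on $q$-primary groups --- your induction-plus-colimit argument closes. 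But note that the patch already invokes the primary decomposition of ${\rm tor}(\cM)$ together with Corollary \ref{sum}, at which point you are one line away from the paper's direct proof, so the prime-by-prime induction buys nothing.
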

\begin{proof}
From the sequence
$$0 \to \Z \to \Q \to \bigoplus_p \mu_p \to 0$$ we obtain the exact sequence
$$0 \to \bigoplus_p {\rm Tor}(\mu_p,\cM) \to \cM \to \Q \otimes_{\Z} \cM \to \bigoplus_p \left(\mu_p \otimes_{\Z} \cM  \right)\to 0.$$
Using now Yuzvinski\u{\i}'s additivity formula \eqref{yuz} and Corollary \ref{sum}, we obtain the desired formula.
Since $\cM \mapsto \Z[\nicefrac1p] \otimes_{\Z} \cM$ is exact, $\cM \mapsto \rho(\Z[\nicefrac1p] \otimes_{\Z} \cM)$ is additive on exact sequences by \eqref{yuz}. Now, Equation \eqref{eqp} implies that $\cM \mapsto \rho_p(\cM)$ must be additive as well. Again, since $\cM \to \Q \otimes_{\Z} \cM$ is exact, \eqref{yuz} implies that $\cM \to \rho_{\infty}(\cM)$ is additive. This proves the theorem.
\end{proof}

Naturally, the question arises whether the quantities $\rho_p$ and $\rho_{\infty}$ can be computed in special cases such as $\cM= \Z \Gamma/ \Z \Gamma f$ for a non-zero-divisor $f \in \Z \Gamma$. For $f \in \Z \Gamma$, we define $|f|_p$ to be the largest integer $k \in \N$, such that $p^{-k}f \in \Z \Gamma$. We set for any prime number $p$ or $\infty$,
$$L_p(f) := \rho_p(\Z \Gamma/ \Z \Gamma f).$$

Our next aim is to establish a computation of $L_p(f)$ for torsionfree elementary amenable groups.

\begin{thm}
Let $\Gamma$ be a torsionfree elementary amenable group and let $f \in \Z \Gamma$ be non-zero. Then, for any prime number $p$, we get:
\begin{equation} \label{compel}
\rho_p(\Z \Gamma/ \Z \Gamma f) = |f|_p \cdot \log(p).
\end{equation}
\end{thm}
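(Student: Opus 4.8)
The plan is to reduce the computation to two elementary pieces by splitting off the $p$-part of $f$. Write $a := |f|_p$, so that $f = p^a g$ with $g := p^{-a}f \in \Z\Gamma$ and $|g|_p = 0$; by definition of $|f|_p$, not all coefficients of $g$ are divisible by $p$. Since $\Gamma$ is torsionfree elementary amenable, Theorem~\ref{eleman} (applied to the fields $\Q$ and $\F_p$) ensures that $\Z\Gamma$, $\Q\Gamma$ and $\F_p\Gamma$ contain no non-trivial zero divisors; in particular $f$ and $g$ are non-zero-divisors, so that $\rho(\Z\Gamma/\Z\Gamma f)$ and $\rho(\Z\Gamma/\Z\Gamma g)$ are finite and $\rho_p$ is defined on all modules appearing below.

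First I would produce the short exact sequence
$$0 \to (\Z/p^a\Z)\Gamma \to \Z\Gamma/\Z\Gamma f \to \Z\Gamma/\Z\Gamma g \to 0.$$
Indeed, $\Z\Gamma f = p^a\,\Z\Gamma g \subseteq \Z\Gamma g$, so the identity induces a surjection $\Z\Gamma/\Z\Gamma f \twoheadrightarrow \Z\Gamma/\Z\Gamma g$ whose kernel is $\Z\Gamma g/\Z\Gamma f = \Z\Gamma g/p^a\Z\Gamma g$. As $g$ is a non-zero-divisor, right multiplication by $g$ is an isomorphism $\Z\Gamma \xrightarrow{\sim} \Z\Gamma g$ of $\Z\Gamma$-modules carrying $p^a\Z\Gamma$ onto $p^a\Z\Gamma g$, so the kernel is isomorphic to $\Z\Gamma/p^a\Z\Gamma = (\Z/p^a\Z)\Gamma$. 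All three modules have finite torsion, so the additivity of $\rho_p$ from Theorem~\ref{main1} reduces the problem to computing $\rho_p$ on the two outer terms.

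For the left-hand term, $(\Z/p^a\Z)\Gamma$ is annihilated by $p^a$, hence consists entirely of $p$-power torsion, and therefore $\Z[\nicefrac1p]\otimes_{\Z}(\Z/p^a\Z)\Gamma = 0$. Equation~\eqref{eqp} then yields $\rho_p\big((\Z/p^a\Z)\Gamma\big) = \rho\big((\Z/p^a\Z)\Gamma\big) = \log|\Z/p^a\Z| = a\log p$, using the value $\rho(k\Gamma)=\log|k|$ for a finite abelian group $k$. It remains to show that $\rho_p(\Z\Gamma/\Z\Gamma g) = 0$, and by Lemma~\ref{positive} this holds as soon as $\Z\Gamma/\Z\Gamma g$ carries no $p$-torsion.

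The heart of the argument — and the step I expect to be the main obstacle — is exactly this $p$-torsion-freeness, which is where the zero-divisor hypothesis is used. Suppose $h \in \Z\Gamma$ satisfies $p^k h \in \Z\Gamma g$ for some $k \geq 1$, say $p^k h = h' g$. Reducing modulo $p$ gives $\bar{h'}\,\bar g = 0$ in $\F_p\Gamma$; since $\bar g \neq 0$ (as $|g|_p = 0$) and $\F_p\Gamma$ is a domain, we conclude $\bar{h'} = 0$, i.e.\ $h' \in p\Z\Gamma$. Cancelling one factor of $p$ (legitimate since $\Z\Gamma$ is torsionfree as an abelian group) lowers $k$ by one, and an induction on $k$ forces $h \in \Z\Gamma g$. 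Hence ${\rm Tor}(\mu_p,\Z\Gamma/\Z\Gamma g) = 0$ and $\rho_p(\Z\Gamma/\Z\Gamma g) = 0$. Assembling the three computations gives $\rho_p(\Z\Gamma/\Z\Gamma f) = a\log p + 0 = |f|_p\cdot\log p$, which is the asserted formula \eqref{compel}.
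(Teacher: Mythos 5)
Your proof is correct and takes essentially the same route as the paper: split off the $p$-part $f = p^{|f|_p}g$, exploit additivity of $\rho_p$ (Theorem \ref{main1}) along a short exact sequence, compute $\rho_p$ of the finite-coefficient group ring directly, and use the Kropholler--Linnell--Moody theorem (Theorem \ref{eleman}) for $\F_p\Gamma$ to show that $\Z\Gamma/\Z\Gamma g$ has no $p$-torsion, so that Lemma \ref{positive} gives $\rho_p(\Z\Gamma/\Z\Gamma g)=0$. The only organizational difference is that the paper first establishes the general identities $|fg|_p=|f|_p+|g|_p$ and $L_p(fg)=L_p(f)+L_p(g)$ and then specializes, whereas you absorb the whole factor $p^{|f|_p}$ in a single exact sequence with kernel $(\Z/p^{|f|_p}\Z)\Gamma$.
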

\begin{proof}
Under the assumptions on $\Gamma$, $k \Gamma$ does not have any non-zero-divisors if $k$ is an integral domain, see \cite{MR964842}. This implies that $|fg|_p = |f|_p + |g|_p$ for any non-zero $f,g \in \Z \Gamma$. In order to show this, we may assume that $|f|_p = |g|_p =0$. Indeed, if $|fg|_p >0$, then the reduction of $f$ mod $p$ would be a non-trivial zero-divisor in $(\Z/p\Z)\Gamma$. Hence, we get $|fg|_p \leq |f|_p + |g|_p$ for all non-zero $f,g \in \Z \Gamma$. The other inequality is obvious.

In this situation, there is an exact sequence
$$
0 \to \Z \Gamma/ \Z\Gamma g \to \Z \Gamma/ \Z\Gamma gf \to \Z \Gamma/ \Z\Gamma f \to 0
$$
for any non-zero $f,g \in \Z \Gamma$. Hence, Yuzvinski\u{\i}'s additivity formula implies $L_p(fg)= L_p(f) + L_p(g)$ for any non-zero $f,g \in \Z \Gamma$. Since $\mu_p \otimes_{\Z} \cM = \cM$ and $\Z[\nicefrac1p] \otimes_{\Z} \cM =\{0\}$ for $\cM := (\Z/p \Z) \Gamma$, we get $L_p(p)= \log(p)$.

Thus -- knowing that both sides of the Equation \eqref{compel} are additive --  we may assume that $|f|_p=0$ in order to establish Equation \eqref{compel}. We claim  that in this case, $\Z \Gamma / \Z \Gamma f$ does not contain any $p$-torsion. Indeed, if $p h = gf$ for some non-zero $g,h \in \Z \Gamma$, then the image of $g$ in $(\Z/p \Z) \Gamma$ is a zero-divisor. Hence, $|g|_p \geq 1$ and $h$ represents zero in $\Z \Gamma/ \Z \Gamma f$ as $h= (g/p)f$ and $g/p \in \Z \Gamma$.

However, if $\Z \Gamma/ \Z \Gamma f$ does not contain $p$-torsion, then $\rho_p(\Z \Gamma/ \Z \Gamma f)=0$ by Lemma \ref{positive}. This proves the claim.
\end{proof}

More generally, we can interpret $L_p(f) = \rho_p(\Z \Gamma/ \Z \Gamma f)$ as an analogue of the Fredholm index of an operator. \begin{proposition} Let $\Gamma$ be an amenable group and $f \in \Z \Gamma$ be a non-zero-divisor. Then, we have
$$L_p(f) = \rho(\ker(f \colon \mu_p \Gamma \to \mu_p \Gamma)) - \rho({\rm coker}(f\colon \mu_p \Gamma \to \mu_p \Gamma)).$$
\end{proposition}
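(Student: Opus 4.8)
The plan is to realize both $\mu_p$-terms occurring in the definition
$\rho_p(\cM)=\rho({\rm Tor}(\mu_p,\cM))-\rho(\mu_p\otimes_{\Z}\cM)$, with $\cM:=\Z\Gamma/\Z\Gamma f$, simultaneously as the kernel and the cokernel of right multiplication by $f$ on $\mu_p\Gamma$, by a single application of the long exact $\mathrm{Tor}$-sequence. First I would use the hypothesis that $f$ is a non-zero-divisor to obtain the short exact sequence of left $\Z\Gamma$-modules
\[
0 \to \Z\Gamma \xrightarrow{\ \cdot f\ } \Z\Gamma \to \cM \to 0,
\]
where $\cdot f$ denotes right multiplication by $f$; this is a morphism of left $\Z\Gamma$-modules with image $\Z\Gamma f$ and cokernel $\cM$, and its injectivity is precisely the non-zero-divisor property.

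Next I would apply the functor $\mu_p\otimes_{\Z}-$ and write down the associated long exact sequence of $\Z$-$\mathrm{Tor}$-groups, which remain left $\Z\Gamma$-modules by functoriality of ${\rm Tor}^{\Z}$ in the $\Z\Gamma$-module variable. Since $\Z\Gamma$ is free, hence flat, over $\Z$, one has ${\rm Tor}_1^{\Z}(\mu_p,\Z\Gamma)=0$, so the long exact sequence collapses to the four-term exact sequence of left $\Z\Gamma$-modules
\[
0 \to {\rm Tor}(\mu_p,\cM) \to \mu_p\Gamma \xrightarrow{\ \cdot f\ } \mu_p\Gamma \to \mu_p\otimes_{\Z}\cM \to 0,
\]
where I have used $\mu_p\otimes_{\Z}\Z\Gamma=\mu_p\Gamma$ as left $\Z\Gamma$-modules, and that ${\rm Tor}_1^{\Z}(\mu_p,\cM)$ is exactly the $p$-power-torsion subgroup ${\rm Tor}(\mu_p,\cM)$ from \eqref{seq1}. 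Reading off the kernel and cokernel of the middle arrow then gives
\[
\ker\!\big(f\colon\mu_p\Gamma\to\mu_p\Gamma\big)={\rm Tor}(\mu_p,\cM),
\qquad
{\rm coker}\!\big(f\colon\mu_p\Gamma\to\mu_p\Gamma\big)=\mu_p\otimes_{\Z}\cM,
\]
as left $\Z\Gamma$-modules.

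Substituting these two identifications into the definition of $\rho_p$ yields the asserted formula, since $L_p(f)=\rho_p(\cM)$ by definition. The proof is essentially homological bookkeeping, so the only points requiring care are well-definedness and module structures. For the former, I must check that $\rho_p(\cM)$ is defined, i.e.\ that $\rho(\cM)<\infty$; this holds because $f$ is a non-zero-divisor, so $\rho(\Z\Gamma/\Z\Gamma f)=\log\det_{\Gamma}(f)<\infty$, and then Yuzvinski\u{\i}'s additivity formula \eqref{yuz} applied to \eqref{seq1} forces both $\rho({\rm Tor}(\mu_p,\cM))$ and $\rho(\mu_p\otimes_{\Z}\cM)$ to be finite, so the difference is meaningful. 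The only genuinely delicate step is ensuring that every arrow in the $\mathrm{Tor}$-sequence is a morphism of left $\Z\Gamma$-modules and that the induced middle map is indeed right multiplication by $f$ on $\mu_p\Gamma$; both are automatic from functoriality, so I expect no serious obstacle.
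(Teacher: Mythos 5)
Your proof is correct, and it takes a route that genuinely differs from the paper's. The paper applies the snake lemma to the diagram whose rows are $0 \to \Z\Gamma \to \Z[\nicefrac 1p]\Gamma \to \mu_p\Gamma \to 0$ and whose vertical maps are right multiplication by $f$, obtaining the four-term exact sequence
$0 \to \ker(f\colon \mu_p\Gamma\to\mu_p\Gamma) \to \Z\Gamma/\Z\Gamma f \to \Z[\nicefrac 1p]\otimes_{\Z}(\Z\Gamma/\Z\Gamma f) \to {\rm coker}(f\colon \mu_p\Gamma\to\mu_p\Gamma)\to 0$,
and then concludes via Yuzvinski\u{\i} additivity \eqref{yuz} combined with Equation \eqref{eqp}. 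You instead tensor the sequence $0\to\Z\Gamma\xrightarrow{\cdot f}\Z\Gamma\to\cM\to0$ with $\mu_p$ over $\Z$ and invoke the ${\rm Tor}$ long exact sequence, which directly produces isomorphisms of left $\Z\Gamma$-modules $\ker(f\colon\mu_p\Gamma\to\mu_p\Gamma)\cong{\rm Tor}(\mu_p,\cM)$ and ${\rm coker}(f\colon\mu_p\Gamma\to\mu_p\Gamma)\cong\mu_p\otimes_{\Z}\cM$, after which the asserted identity is literally the definition of $\rho_p(\cM)=L_p(f)$. The two arguments process the same underlying data (the paper's rows are the flat resolution $0\to\Z\to\Z[\nicefrac 1p]\to\mu_p\to0$ tensored with $\Z\Gamma$), but yours is more economical and yields a stronger, module-level conclusion: it identifies the kernel and cokernel themselves with the two terms defining $\rho_p$, with no appeal to additivity of $\rho$ in the main step, whereas the paper only obtains an identity between entropies. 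One small repair to your well-definedness check: Yuzvinski\u{\i} applied to \eqref{seq1} does give $\rho({\rm Tor}(\mu_p,\cM))\le\rho(\cM)<\infty$, but finiteness of $\rho(\mu_p\otimes_{\Z}\cM)$ additionally requires $\rho(\Z[\nicefrac 1p]\otimes_{\Z}\cM)\le\rho(\cM)$, which comes from the colimit argument of Proposition \ref{colim} as used in Lemma \ref{positive}, not from \eqref{seq1} and \eqref{yuz} alone; since the paper's definition of $\rho_p$ rests on the same point without comment, this is a footnote rather than a gap.
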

\begin{proof}
The snake lemma applied to the diagram
$$
\xymatrix{
0 \ar[r] & \Z \Gamma \ar[d]^{f} \ar[r] &\Z[\nicefrac 1p] \Gamma \ar[d]^{f} \ar[r]& \mu_p\Gamma \ar[d]^{f} \ar[r] & 0 \\
0 \ar[r] & \Z \Gamma  \ar[r] &\Z[\nicefrac 1p] \Gamma  \ar[r]& \mu_p\Gamma  \ar[r] & 0 }
$$
yields an exact sequence
$$0 \to \ker(f \colon \mu_p \Gamma \to \mu_p \Gamma) \to \Z \Gamma/ \Z \Gamma f \to \Z[\nicefrac 1p] \otimes_{\Z} \Z \Gamma/ \Z \Gamma f \to {\rm coker}(f\colon \mu_p \Gamma \to \mu_p \Gamma) \to 0.$$ Hence, we get
$$L_p(f)= \rho_p(\Z \Gamma/ \Z \Gamma f) = \rho(\Z \Gamma/ \Z \Gamma f) - \rho(\Z[\nicefrac 1p] \otimes_{\Z} \Z \Gamma/ \Z \Gamma f) =$$
$$=\rho(\ker(f \colon \mu_p \Gamma \to \mu_p \Gamma)) - \rho({\rm coker}(f\colon \mu_p \Gamma \to \mu_p \Gamma)).$$ This finishes the proof.
\end{proof}

\section{Computations for the entropy of $\Q \Gamma$-modules}

Let denote by $s(\Z \Gamma)$ the set of non-zero-divisors in the ring $\Z \Gamma$ and similarly by $s(\Q \Gamma)$ the set of non-zero-divisors in $\Q \Gamma$. Clearly, $s(\Z \Gamma)$ and $s(\Q \Gamma)$ are monoids. For any $f,g \in s(\Z \Gamma)$ we have
$$
0 \to \Z \Gamma/ \Z\Gamma g \to \Z \Gamma/ \Z\Gamma gf \to \Z \Gamma/ \Z\Gamma f \to 0
$$
and using Theorem \ref{main1}, we get $L_p(fg)=L_p(f) + L_p(g)$, for all prime numbers $p$ and $\infty$. It is obvious that $L_p$ has a unique extension to $s(\Q \Gamma)$, satisfying the same additivity property.

\begin{thm}
\label{T-solenoid entropy}
Let $\Gamma$ be an amenable group and $f \in \Q \Gamma$ be a non zero divisor. Then,
$$\rho(\Q \Gamma/ \Q \Gamma f) = \log \det\!{}_{\Gamma}(f) - \sum_{p} L_p(f)$$
\end{thm}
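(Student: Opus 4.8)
The plan is to first prove the identity for an integral non-zero-divisor $f \in \Z\Gamma$, where it drops straight out of the decomposition Theorem \ref{main1}, and then to pass to $\Q\Gamma$ by clearing denominators. For $f \in \Z\Gamma$ a non-zero-divisor I set $\cM := \Z\Gamma/\Z\Gamma f$. Then $\rho(\cM) = \log\det\!{}_{\Gamma}(f) < \infty$ by the theorem of Li and the second author recalled in the introduction, so $\cM$ has finite torsion and Theorem \ref{main1} applies. The decisive point is the identification $\rho_\infty(\cM) = \rho(\Q \otimes_{\Z} \cM) = \rho(\Q\Gamma/\Q\Gamma f)$: tensoring the exact sequence $0 \to \Z\Gamma \xrightarrow{\cdot f} \Z\Gamma \to \cM \to 0$ with the flat $\Z$-module $\Q$ and using that $f$ stays a non-zero-divisor in $\Q\Gamma$ identifies $\Q \otimes_{\Z}\cM$ with $\Q\Gamma/\Q\Gamma f$. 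Feeding $\rho_\infty(\cM) = \rho(\Q\Gamma/\Q\Gamma f)$ and $\sum_p \rho_p(\cM) = \sum_p L_p(f)$ into Theorem \ref{main1} and rearranging gives the claim for integral $f$.

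To reach an arbitrary non-zero-divisor $f \in \Q\Gamma$, I would write $f = N^{-1} g$ with $N \in \N$ and $g := Nf \in \Z\Gamma$; since $N$ is a unit in $\Q\Gamma$, the element $g$ is again a non-zero-divisor and hence lies in $s(\Z\Gamma)$. Because $\Q\Gamma f = \Q\Gamma g$, the left-hand side is unaffected, $\rho(\Q\Gamma/\Q\Gamma f) = \rho(\Q\Gamma/\Q\Gamma g)$, and the integral case computes this as $\log\det\!{}_{\Gamma}(g) - \sum_p L_p(g)$ (this sum converges, being $\sum_p \rho_p(\Z\Gamma/\Z\Gamma g)$ with nonnegative terms by Lemma \ref{positive} and finite total by Theorem \ref{main1}). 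It then remains to verify that the right-hand side is likewise invariant under the rescaling $f \mapsto g = Nf$, i.e. that the corrections $\log\det\!{}_{\Gamma}(g) - \log\det\!{}_{\Gamma}(f)$ and $\sum_p L_p(g) - \sum_p L_p(f)$ coincide. The first equals $\log N$ by the scaling property $\log\det\!{}_{\Gamma}(Nf) = \log N + \log\det\!{}_{\Gamma}(f)$ of the Fuglede--Kadison determinant under a positive scalar, and the second equals $\sum_p L_p(N) = \log N$ by additivity of the extension of $L_p$ to $s(\Q\Gamma)$. The two corrections cancel, and the formula follows for all $f$.

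The one computation that deserves care -- and which I expect to be the main technical step -- is $\sum_p L_p(N) = \log N$ for a positive integer $N$. Here $\Z\Gamma/\Z\Gamma N = (\Z/N\Z)\Gamma$, and writing $N = \prod_q q^{a_q}$ the Chinese Remainder Theorem gives ${\rm Tor}(\mu_p, (\Z/N\Z)\Gamma) = (\Z/p^{a_p}\Z)\Gamma$, while $\mu_p \otimes_{\Z}(\Z/N\Z)\Gamma = 0$ since multiplication by $p^{a_p}$ is surjective on $\mu_p$ and multiplication by any integer prime to $p$ is invertible on $\mu_p$. Hence $L_p(N) = \rho\!\left((\Z/p^{a_p}\Z)\Gamma\right) = a_p \log p$ by the normalization $\rho(k\Gamma) = \log|k|$, and summing over $p$ yields $\sum_p L_p(N) = \sum_p a_p \log p = \log N$. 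I would stress that this argument is valid for every amenable $\Gamma$; in particular it does not rely on the formula $L_p(f) = |f|_p\log p$, which is available only under the stronger hypothesis that $\Gamma$ is torsionfree elementary amenable. Conceptually, the whole proof amounts to observing that $f \mapsto \rho(\Q\Gamma/\Q\Gamma f)$, $f \mapsto \log\det\!{}_{\Gamma}(f)$, and $f \mapsto \sum_p L_p(f)$ are all additive on the monoid $s(\Q\Gamma)$ and agree on the generators, so that the remaining content is entirely carried by Theorem \ref{main1} and the $\log\det\!{}_{\Gamma}$ computation of \cite{lithom}.
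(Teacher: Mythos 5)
Your proposal is correct and follows essentially the same route as the paper's own proof: clear denominators to reduce to $h = nf \in \Z\Gamma$, apply Theorem \ref{main1} to $\Z\Gamma/\Z\Gamma h$ together with $\rho(\Z\Gamma/\Z\Gamma h) = \log\det\!{}_{\Gamma}(h)$ from \cite{lithom}, identify $\rho_\infty(\Z\Gamma/\Z\Gamma h) = \rho(\Q\Gamma/\Q\Gamma f)$, and cancel the rescaling via $\sum_p L_p(n) = \log n$. The only difference is presentational: the paper asserts $\log n = \sum_p L_p(n)$ as ``clear,'' whereas you verify it carefully via the Chinese Remainder Theorem and the normalization $\rho(k\Gamma) = \log|k|$, which is a worthwhile addition but not a different argument.
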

\begin{proof}
Let $n \in \N$ be such that $h:=nf \in \Z \Gamma$. Now, $\Q \Gamma/ \Q \Gamma f= \Q \Gamma/ \Q \Gamma h$ and $\Q\Gamma/\Q\Gamma f = \Q \otimes_{\Z}\left( \Z \Gamma/\Z \Gamma h \right)$. Hence, we get
$$\rho(\Q \Gamma/ \Q \Gamma f) = \log \det\!{}_{\Gamma}(nf) - \sum_{p} L_p(nf)$$
by Theorem \ref{main1} applied to $\cM=\Z \Gamma/\Z \Gamma h$ and \cite[Theorem 1.2]{lithom}. Now, clearly $\log n = \sum_p L_p(n)$, so that we get the desired conclusion.
\end{proof}

\begin{remark}
Theorem \ref{T-solenoid entropy} also holds for every non-zero-divisor $f\in M_n(\Q\Gamma)$ and $n\in\N$.
\end{remark}

\begin{cor}
Let $\Gamma$ be a torsionfree elementary amenable group and $f = \sum_{\gamma} f_{\gamma} \gamma \in \Q \Gamma$ non-zero. Let us write $f_{\gamma} = \frac{a_{\gamma}}{b_{\gamma}}$ with $a_{\gamma}, b_{\gamma} \in \Z$ coprime. Then
$$\rho(\Q \Gamma/\Q \Gamma f) = \log \det\!{}_{\Gamma}(f) - \log(\gcd\{a_\gamma \mid \gamma \in \Gamma\}) + \log({\rm lcm}\{b_\gamma \mid \gamma \in \Gamma\}).$$
\end{cor}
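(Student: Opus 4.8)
The plan is to read the statement off from Theorem~\ref{T-solenoid entropy}, which already supplies
$$\rho(\Q\Gamma/\Q\Gamma f) = \log\det\!{}_\Gamma(f) - \sum_p L_p(f).$$
Since $\Gamma$ is torsionfree elementary amenable, $\Q\Gamma$ is a domain by Theorem~\ref{eleman}, so the nonzero $f$ is automatically a non-zero-divisor and the theorem applies. The entire content of the corollary is then the purely arithmetic identity
$$\sum_p L_p(f) = \log(\gcd\{a_\gamma \mid \gamma\in\Gamma\}) - \log({\rm lcm}\{b_\gamma \mid \gamma\in\Gamma\}),$$
and I would devote the proof to computing the individual summands $L_p(f)$ and resumming. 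Because $f$ has finite support, $\gcd\{a_\gamma\}$ and ${\rm lcm}\{b_\gamma\}$ are genuine positive integers, so only finitely many primes contribute and there is no convergence issue.

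First I would pin down $L_p(f)$ for $f\in\Q\Gamma$. Set $N:={\rm lcm}\{b_\gamma\}$, so that $h:=Nf\in\Z\Gamma$ is nonzero. Using the additivity $L_p(Nf)=L_p(N)+L_p(f)$ of the extension of $L_p$ to $s(\Q\Gamma)$, the multiplicativity giving $L_p(N)=v_p(N)\log p$ (where $v_p$ is the $p$-adic valuation, and $L_p(p)=\log p$ was recorded above), and the integral formula $L_p(h)=|h|_p\log p=\big(\min_\gamma v_p(h_\gamma)\big)\log p$ proved above for torsionfree elementary amenable $\Gamma$, I would obtain
$$L_p(f) = \big(\min_\gamma v_p(h_\gamma) - v_p(N)\big)\log p = \big(\min_\gamma v_p(f_\gamma)\big)\log p,$$
so that $L_p(f)$ depends only on $f$ and is the expected $\Q$-valued extension of $|\cdot|_p\log p$.

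The crux is the per-prime identity
$$\min_\gamma v_p(f_\gamma) = v_p(\gcd\{a_\gamma\}) - v_p({\rm lcm}\{b_\gamma\}).$$
Here the coprimality $\gcd(a_\gamma,b_\gamma)=1$ is what makes the computation go through: for each fixed $\gamma$ at most one of $v_p(a_\gamma)$, $v_p(b_\gamma)$ is nonzero, so $v_p(f_\gamma)=v_p(a_\gamma)-v_p(b_\gamma)$ is either nonnegative or strictly negative, and I would split into two cases according to whether $p$ divides some $b_\gamma$. If it does, any such $\gamma$ forces $v_p(a_\gamma)=0$, whence $v_p(\gcd\{a_\gamma\})=0$ while $\min_\gamma v_p(f_\gamma)=-\max_\gamma v_p(b_\gamma)=-v_p({\rm lcm}\{b_\gamma\})$; if it does not, all $v_p(b_\gamma)=0$ and $\min_\gamma v_p(f_\gamma)=\min_\gamma v_p(a_\gamma)=v_p(\gcd\{a_\gamma\})$. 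Either way the identity holds. Finally I would sum over $p$ and invoke $\log n=\sum_p v_p(n)\log p$ for positive integers to collapse the sum into $\log(\gcd\{a_\gamma\})-\log({\rm lcm}\{b_\gamma\})$, and substitute back into Theorem~\ref{T-solenoid entropy}.

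The main obstacle is not analytic but bookkeeping: the one genuinely non-formal step is the valuation identity of the previous paragraph, where coprimality must be used carefully, since it is precisely what prevents cancellation between numerator and denominator valuations at the same $\gamma$. Everything else—the reduction to $\sum_p L_p(f)$, the clearing of denominators, and the final resummation—is forced by the additivity of $L_p$ and the already-established integral formula, and is routine.
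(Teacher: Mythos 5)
Your proposal is correct and follows essentially the same route as the paper: both rest on Theorem~\ref{T-solenoid entropy} combined with the extension of the formula $L_p(f)=|f|_p\cdot\log(p)$ from $\Z\Gamma$ to $\Q\Gamma$ (available because $\Gamma$ is torsionfree elementary amenable, so nonzero elements are non-zero-divisors), followed by the resummation $\sum_p L_p(f)=\log(\gcd\{a_\gamma\})-\log(\mathrm{lcm}\{b_\gamma\})$. The only difference is one of detail, not of method: you spell out the denominator-clearing derivation of $L_p$ on $\Q\Gamma$ and the coprimality case analysis behind the valuation identity, both of which the paper's proof leaves implicit.
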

\begin{proof}
If $\Gamma$ is torsionfree elementary amenable and $f \in \Q \Gamma$ non-zero, then $L_p(f) = |f|_p \cdot \log(p)$, i.e.\! for $f = \sum_{\gamma} f_\gamma \gamma$ we have $L_p(f)= \min\{ |f_\gamma|_p \mid \gamma \in \Gamma \} \cdot \log(p)$, where $|.|_p$ denotes the usual $p$-adic valuation. Thus, we obtain
\begin{equation}
\rho(\Q \Gamma/ \Q \Gamma f) = \log \det\!{}_{\Gamma}(f) - \sum_p |f|_p \cdot \log(p).
\end{equation}
This proves the claim.
\end{proof}

Let us mention the following computation, which can be reduced to the previous corollary.

\begin{lem}[Lind-Ward, see \cite{MR961739}] Let $a \in GL_n\Q$ and consider $t-a \in M_n(\Q [\Z])$. Then, $$L_p(t-a) = L_p(\chi_a(t))= -k \cdot \log(p),$$ where $\chi_a(t)$ denotes the characteristic polynomial of the matrix $a$ and $k$ is the largest natural number so that $p^k$ divides the denominator of some coefficient of $\chi_A(t)$.
\end{lem}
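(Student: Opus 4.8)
The plan is to prove the two asserted equalities separately, treating the passage from the matrix $t-a$ to its determinant $\chi_a(t)=\det(t-a)$ as the main step, and then reading off the numerical value from the preceding Corollary.

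First I would dispose of the second equality $L_p(\chi_a(t))=-k\log p$, which is immediate. Here $\Gamma=\Z$ is torsionfree elementary amenable, so the preceding Corollary applies to the element $\chi_a(t)=\sum_j c_j t^j\in\Q[\Z]$ and gives $L_p(\chi_a(t))=\min_j |c_j|_p\cdot\log p$, where $|\cdot|_p$ denotes the $p$-adic valuation. Since $a\in\GL_n(\Q)$, the characteristic polynomial is monic, so its leading coefficient equals $1$ and $|c_n|_p=0$; hence $\min_j |c_j|_p\le 0$, and $-\min_j |c_j|_p$ is precisely the largest $k$ such that $p^k$ divides the denominator of some coefficient $c_j$. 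This yields $L_p(\chi_a(t))=-k\log p$ at once.

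The substance lies in the first equality $L_p(t-a)=L_p(\det(t-a))=L_p(\chi_a(t))$, i.e.\ that $L_p$ of the matrix agrees with $L_p$ of its determinant. For this I would exploit that $\Q[\Z]=\Q[t,t^{-1}]$ is a principal ideal domain. Working first over $\Q[t]$, the matrix $t-a$ admits a Smith normal form $U(t-a)V=\operatorname{diag}(f_1,\dots,f_n)$ with $U,V\in\GL_n(\Q[t])$ and monic invariant factors $f_1\mid\cdots\mid f_n$ satisfying $\prod_i f_i=\chi_a(t)$; taking determinants forces $\det U\cdot\det V=1$. Because $\chi_a(0)=\det(-a)\ne 0$ we have $t\nmid\chi_a(t)$, so the same factorization is valid over the localization $\Q[t,t^{-1}]$, and the cokernel splits as $\Q\Gamma^{\oplus n}/\Q\Gamma^{\oplus n}(t-a)\cong\bigoplus_i\Q\Gamma/\Q\Gamma f_i$. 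Using the matrix version of Theorem \ref{T-solenoid entropy} (see the Remark following it), together with the additivity of $\rho_p$ from Theorem \ref{main1} and the multiplicativity of $L_p$ --- which comes from the short exact sequences $0\to\Q\Gamma/\Q\Gamma g\to\Q\Gamma/\Q\Gamma gf\to\Q\Gamma/\Q\Gamma f\to 0$ --- I would conclude $L_p(t-a)=\sum_i L_p(f_i)=L_p\!\left(\prod_i f_i\right)=L_p(\chi_a(t))$.

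The point requiring care, and the main obstacle, is the correct interpretation of the matrix invariant $L_p(t-a)$: since $a$ has rational entries, $t-a$ does not lie in $M_n(\Z\Gamma)$, so $L_p$ must be taken in its extended, additive sense on $s(\Q\Gamma)$, rather than naively as $\rho_p$ of the $\Q\Gamma$-cokernel --- the latter would vanish, that cokernel being a divisible torsion-free abelian group. What makes the bookkeeping close up is that the invertible factors $U,V$ contribute nothing: their determinants are nonzero constants with $\det U\cdot\det V=1$, so $L_p(\det U)+L_p(\det V)=L_p(1)=0$, and hence the unit ambiguity inherent in the Smith normal form does not affect the computation. Once this is in place, the two equalities combine to give $L_p(t-a)=-k\log p$.
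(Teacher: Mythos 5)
Your strategy --- reduce the matrix case to the scalar case via the Smith normal form of $t-a$ over $\Q[t]$, and then invoke the formula $L_p(f)=\min_\gamma|f_\gamma|_p\cdot\log p$ for torsionfree elementary amenable $\Gamma$ --- is exactly the reduction the paper gestures at (it offers no proof beyond the remark that the lemma ``can be reduced to the previous corollary''), and your treatment of the second equality $L_p(\chi_a(t))=-k\log p$ is correct. You also correctly identify the central subtlety: $L_p$ on $s(\Q\Gamma)$ and on $M_n(\Q\Gamma)$ is \emph{not} a function of the $\Q\Gamma$-cokernel (that cokernel is a $\Q$-vector space, so its $\rho_p$ vanishes identically), so the module isomorphism $\Q\Gamma^{\oplus n}/\Q\Gamma^{\oplus n}(t-a)\cong\bigoplus_i\Q\Gamma/\Q\Gamma f_i$ proves nothing by itself; the argument must run through the matrix identity $U(t-a)V=D=\mathrm{diag}(f_1,\dots,f_n)$ and the multiplicativity $L_p(AB)=L_p(A)+L_p(B)$ for non-zero-divisor matrices.

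There is, however, a genuine gap at the very point where you declare the bookkeeping to ``close up''. Multiplicativity gives $L_p(D)=L_p(U)+L_p(t-a)+L_p(V)$, where $L_p(U)$ and $L_p(V)$ are the \emph{matrix} invariants; what you verify is $L_p(\det U)+L_p(\det V)=L_p(1)=0$, but what you need is $L_p(U)+L_p(V)=0$. The identification $L_p(W)=L_p(\det W)$ for $W\in\GL_n(\Q[t,t^{-1}])$ is not formal: $L_p$ on matrices is defined through cokernels of integral lattices and the additive extension, not through determinants, and equating the two is precisely a $K_1$-type assertion. (Invertibility of $W$ alone gives no control: $L_p\bigl(\tfrac1p I_n\bigr)=-n\log p\neq 0$.) Fortunately the gap is fillable. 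First, multiplicativity makes $L_p$ conjugation-invariant and gives $L_p(E)+L_p(E^{-1})=L_p(I)=0$; since an elementary matrix $E=I+ce_{ij}$ is conjugate to $E^{-1}=I-ce_{ij}$ by a diagonal sign matrix, this forces $L_p(E)=0$. Second, because $\Q[t]$ is Euclidean, the Smith normal form of $t-a$ can be realized with $U$ and $V$ products of elementary matrices and diagonal matrices with entries in $\Q^{*}=\Q[t]^{*}$, whence $L_p(U)=L_p(\det U)$ and $L_p(V)=L_p(\det V)$. Only with this supplementary lemma does your determinant computation yield $L_p(t-a)=\sum_i L_p(f_i)=L_p(\chi_a(t))$; as written, that crucial equality is unproved.
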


We conjecture that for every positive $f \in \Z \Gamma$, the quantity $L_p(f)$ can be computed locally on $\Gamma$. More precisely,
$$L_p(f) = \lim_F \frac{\log |\det(f_F)|_p}{|F|}$$
in analogy to one of the main results in \cite{lithom}. Here, for any $f \in \Z \Gamma$ and finite subset $F \subset \Gamma$, $f_F$ denotes the $F \times F$-matrix, which is obtained from $f$ by restriction to $\Z F \subset \Z \Gamma$.
A closely related quantity is $F \mapsto \dim_{\Z/p\Z}(\ker(f_F \colon (\Z/p\Z)F \to (\Z/p\Z)F))$. It follows from results of Elek \cite{Elek03a}, that the normalized limit exists in this case.

\section{Torsion submodules and localization}

Let $\Gamma$ be an amenable group, such that $\Z \Gamma$ does not contain zero-divisors. It is well-known that $\Z \Gamma$ satisfies the left and right Ore condition, i.e.\ for non-zero $a,b \in \Z \Gamma$, there exist non-zero $c,c',d,d' \in \Z \Gamma$, such that $ca = db$ and $ac'=bd'$. This was first observed in \cite{tamari} for coefficients not only in $\Z$ but in any integral domain, see also \cite[Example 8.16]{Luck}. Let us give a new argument for some interesting cases -- using the torsion invariant for $\Z \Gamma$-modules.

\begin{proposition}[Tamari]
Let $k$ be a finite field or $k=\Z$ and $\Gamma$ be an amenable group, so that $k \Gamma$ does not contain zero-divisors. Then, the ring $k \Gamma$ satisfies the left and right Ore condition.
\end{proposition}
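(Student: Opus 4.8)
The plan is to verify the right Ore condition; the left condition is symmetric. Fix non-zero $a, b \in k\Gamma$, and let $f := a$ play the role of the divisor whose multiplication we want to control. The key idea is that in the zero-divisor-free ring $k\Gamma$, the two principal left ideals $k\Gamma a$ and $k\Gamma b$ cannot have vanishing torsion on their "overlap," and the additivity of $\rho$ will force a nontrivial intersection. Concretely, I would first record that since $k\Gamma$ has no zero-divisors, the map $k\Gamma \stackrel{a}{\to} k\Gamma$ is injective, so $k\Gamma a \cong k\Gamma$ as $\Z\Gamma$-modules, and likewise for $b$; hence $\rho(k\Gamma a) = \rho(k\Gamma b) = \rho(k\Gamma)$, which equals $\log|k|$ when $k$ is a finite field and which is handled by the rank function (Theorem \ref{zdcZ} and Remark \ref{liZ}) when $k = \Z$.

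The heart of the argument is to study the sum $k\Gamma a + k\Gamma b \subseteq k\Gamma$ together with the intersection $k\Gamma a \cap k\Gamma b$. I would apply Yuzvinski\u{\i}'s additivity formula \eqref{yuz} to the exact sequence
$$0 \to k\Gamma a \cap k\Gamma b \to k\Gamma a \oplus k\Gamma b \to k\Gamma a + k\Gamma b \to 0,$$
which gives
$$\rho(k\Gamma a \cap k\Gamma b) = \rho(k\Gamma a) + \rho(k\Gamma b) - \rho(k\Gamma a + k\Gamma b).$$
Since $k\Gamma a + k\Gamma b$ is a submodule of $k\Gamma$, we have $\rho(k\Gamma a + k\Gamma b) \le \rho(k\Gamma)$ (monotonicity from additivity on the quotient). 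In the finite-field case this reads $\rho(k\Gamma a \cap k\Gamma b) \ge 2\log|k| - \log|k| = \log|k| > 0$, and in the $\Z$-case the analogous rank computation gives $\rk(k\Gamma a \cap k\Gamma b) \ge 1 > 0$. By Lemma \ref{posent} (respectively by Property (4) of the rank function), a nonzero value of the invariant forces $k\Gamma a \cap k\Gamma b \neq \{0\}$.

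Finally I would extract the Ore elements: pick any nonzero element of $k\Gamma a \cap k\Gamma b$, writing it as $ca = db$ for some $c, d \in k\Gamma$; then $c \ne 0$ and $d \ne 0$ because $k\Gamma$ has no zero-divisors and $a, b \ne 0$. This is exactly the right Ore condition, and applying the same reasoning to the opposite ring (or directly to the right ideals $a k\Gamma$, $b k\Gamma$) yields the left Ore condition. I expect the main obstacle to be the bookkeeping that keeps the two cases $k$ finite field and $k = \Z$ uniform: the torsion invariant $\rho$ is the right tool precisely when $k$ is finite (where $\rho(k\Gamma) = \log|k|$ is positive and finite), whereas for $k = \Z$ one has $\rho(\Z\Gamma) = 0$ and must instead invoke the von Neumann rank, using $\rk(\Z\Gamma) = 1$ and Property (4). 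The positivity statement in Lemma \ref{posent} — that a nonzero submodule of $k\Gamma$ has strictly positive torsion — is what makes the finite-field case go through, and the parallel rank statement handles the integral case.
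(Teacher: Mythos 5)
Your argument is correct, and in the finite-field case it is essentially the paper's own proof in different packaging: the paper shows that the map $k\Gamma \oplus k\Gamma \ni (x,y) \mapsto xa - yb \in k\Gamma$ cannot be injective because $\rho(k\Gamma \oplus k\Gamma) = 2\log|k| > \log|k| = \rho(k\Gamma)$, which is exactly your count $\rho(k\Gamma a \cap k\Gamma b) \geq 2\log|k| - \log|k| > 0$ read off from the exact sequence $0 \to k\Gamma a \cap k\Gamma b \to k\Gamma a \oplus k\Gamma b \to k\Gamma a + k\Gamma b \to 0$ together with \eqref{yuz} and Corollary \ref{sum}. Where you genuinely diverge is the case $k=\Z$. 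The paper stays with the torsion invariant: injectivity of $(x,y) \mapsto xa - yb$ would yield an embedding of $\Z\Gamma$ into $\Z\Gamma/\Z\Gamma a$, which is absurd because $\rho(\Z\Gamma) = \infty$ while $\rho(\Z\Gamma/\Z\Gamma a) < \infty$; that finiteness is a non-trivial input, ultimately coming from \cite{lithom}. You instead run the same intersection count with the von Neumann rank, using its additivity, $\rk(\Z\Gamma)=1$, and monotonicity; given the properties listed in Section 2 this is self-contained and avoids the appeal to \cite{lithom}, at the cost of resting on L\"uck's dimension theory, on which those properties depend. Three small repairs. First, your parenthetical claim that $\rho(\Z\Gamma)=0$ is false: in fact $\rho(\Z\Gamma)=\infty$ for infinite $\Gamma$, and it is this infinitude (not vanishing) that makes the $\rho$-subtraction meaningless for $k=\Z$; the misstatement plays no role in your rank argument, which stands on its own. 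Second, your appeals to Lemma \ref{posent} and to Property (4) of the rank are in the wrong logical direction: those results say that nonzero submodules have positive invariant, whereas what you need is only that the zero module has zero invariant, which is trivial, so positivity of $\rho$ (or of $\rk$) of the intersection already forces it to be nonzero. Third, the condition $ca=db$ with $c,d \neq 0$ (equivalently $k\Gamma a \cap k\Gamma b \neq 0$) is customarily called the \emph{left} Ore condition; since you dispose of the other side by passing to the opposite ring, this is only a matter of labels.
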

\begin{proof}
In order to prove the left Ore condition, it is enough to show that for any pair $f,g$ of non-zero elements, the map
$$k \Gamma \oplus k \Gamma \ni (x,y) \mapsto xf-yg \in k \Gamma$$
cannot be injective. However, this is clear since $\rho(k \Gamma \oplus k \Gamma) = 2 \log |k| > \log |k| = \rho(k \Gamma)$. In case $k=\Z$, injectivity of
$$\Z \Gamma \oplus \Z \Gamma \ni (x,y) \mapsto xf-yg \in \Z \Gamma$$
yields an injection $\Z \Gamma \ni y \mapsto yg \in \Z \Gamma/ \Z \Gamma f$, which is absurd since $\rho(\Z \Gamma)= \infty$ and $\rho( \Z \Gamma/ \Z \Gamma f)< \infty$.
This finishes the proof.
\end{proof}

For any $\Z \Gamma$-module $\cM$, we define
$${\rm tor}(\cM) := \{ f \in \cM \mid \exists a \in \Z \Gamma, a \neq 0, af=0\}.$$
Using the Ore condition, it is easy to see that ${\rm tor}(\cM) \subset \cM$ is a $\Z \Gamma$-submodule of $\cM$.
We call a $\Z \Gamma$-module $\cM$ \emph{torsionfree} if for all non-zero $f \in \cM$ and non-zero $g \in \Z \Gamma$, $gf\neq 0$.
Consider the extension
$$0 \to {\rm tor}(\cM) \to \cM \to {\rm f}(\cM) \to 0.$$

We denote by $\cO(\Gamma)$ the Ore localization  \cite[Section 10A]{Lam} of $\Z \Gamma$, which is a skew field. Note that there is no difference between the left and the right Ore localization \cite[Corollary 10.14]{Lam}. We can express the rank of a $\Z \Gamma$-module $\cM$ by the formula:
$${\rm rk}(\cM)=\dim_{\cO(\Gamma)} \left(\cO(\Gamma) \otimes_{\Z \Gamma} \cM\right) \in \{0,1,2,\dots \} \cup \{ \infty \}.$$
The inclusion $\Z \Gamma \subset \cO(\Gamma)$ is
a flat ring extension \cite[Propositions 4.3 and 4.4]{Lam}. Hence, an exact sequence $0 \to \cM' \to \cM \to \cM'' \to 0$ yields ${\rm rk}(\cM) = {\rm rk}(\cM') + {\rm rk}(\cM'')$.

\begin{lem} \label{lemmod}
Let $\Gamma$ be an amenable group such that $\Z \Gamma$ does not contain zero-divisors. Let $\cM$ be a $\Z \Gamma$-module.
\begin{enumerate}
\item[(i)] $\tor(\cM)$ is precisely the kernel of the canonical map $\cM \to \cO(\Gamma) \otimes_{\Z \Gamma} \cM$.
\item[(ii)] The module ${\rm f}(\cM)$ is torsionfree.
\item[(iii)] If $\cM$ is finitely generated, then ${\rm f}(\cM)$ embeds into a finitely generated free $\Z \Gamma$-module.
\item[(iv)]  For any $\cM$, ${\rm tor}(\cM)=\cM$ if and only ${\rm rk}(\cM)=0$.

\end{enumerate}
\end{lem}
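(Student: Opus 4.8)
The plan is to treat (i) as the foundational statement, derive (iv) from it, prove (ii) by a direct computation, and bootstrap (iii) from (i) together with a common-denominator argument inside the Ore localization. Throughout I write $\phi \colon \cM \to \cO(\Gamma) \otimes_{\Z \Gamma} \cM$ for the canonical map $m \mapsto 1 \otimes m$, and I use the standing hypothesis that every nonzero element of $\Z\Gamma$ is a non-zero-divisor.

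For (i), one inclusion is immediate: if $m \in \tor(\cM)$, choose a nonzero $a$ with $am = 0$; then $1 \otimes m = a^{-1}a \otimes m = a^{-1} \otimes am = 0$, so $m \in \ker(\phi)$. For the reverse inclusion I would invoke the standard description of the Ore localization from \cite[Section 10A]{Lam}: every element of $\cO(\Gamma) \otimes_{\Z \Gamma} \cM$ can be written as $s^{-1} \otimes m$, and such an element vanishes precisely when there is a nonzero $a \in \Z\Gamma$ with $am = 0$. Applied to $1 \otimes m$ this gives $\ker(\phi) \subseteq \tor(\cM)$. This is really just the definition of torsion in localization theory, so I expect no difficulty here beyond citing \cite{Lam}.

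For (ii), I argue directly. Suppose $g \cdot \overline{f} = 0$ in ${\rm f}(\cM) = \cM/\tor(\cM)$ for some nonzero $g \in \Z\Gamma$, and lift $\overline{f}$ to $f \in \cM$. Then $gf \in \tor(\cM)$, so $a(gf) = 0$ for some nonzero $a$; since $\Z\Gamma$ is a domain, $ag \neq 0$, and $(ag)f = 0$ forces $f \in \tor(\cM)$, i.e.\ $\overline{f} = 0$. Part (iv) then follows formally from (i): if $\tor(\cM) = \cM$ then $\phi = 0$, and since the image of $\phi$ generates $\cO(\Gamma) \otimes_{\Z \Gamma} \cM$ over $\cO(\Gamma)$, the localization vanishes, so $\rk(\cM) = 0$; conversely, if $\rk(\cM) = 0$ then $\cO(\Gamma) \otimes_{\Z \Gamma}\cM = 0$, so $\phi$ is the zero map and $\ker(\phi) = \cM$, whence $\tor(\cM) = \cM$ by (i).

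The main work, and the step I expect to be the real obstacle, is (iii). Since $\cM$ is finitely generated so is ${\rm f}(\cM)$, and being torsionfree (by (ii)) part (i) shows the canonical map $\iota \colon {\rm f}(\cM) \to V := \cO(\Gamma)\otimes_{\Z \Gamma} {\rm f}(\cM)$ is injective, where $V$ is a finite-dimensional left $\cO(\Gamma)$-vector space, say $V \cong \cO(\Gamma)^d$ with $d = \rk({\rm f}(\cM))$. Expressing the images of a finite generating set in coordinates produces finitely many scalars $c_{ij} \in \cO(\Gamma)$. The key observation is that denominators should be cleared on the \emph{right}: right multiplication $R_t$ by a scalar $t \in \cO(\Gamma)$ is left $\cO(\Gamma)$-linear by associativity, hence left $\Z\Gamma$-linear, and injective when $t \neq 0$ since $\cO(\Gamma)$ is a skew field. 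Using the right Ore condition in the form already recorded for $\Z\Gamma$ (nonzero right ideals meet nontrivially), extended to a finite family by induction, I would produce a single nonzero $t \in \Z\Gamma$ with $c_{ij} t \in \Z\Gamma$ for all $i,j$. Then $v \mapsto R_t(\iota(v))$ is a $\Z\Gamma$-linear injection of ${\rm f}(\cM)$ into $\Z\Gamma^d$, as required. The delicate point is keeping the left module structure compatible with right scaling and confirming the existence of a common right denominator for finitely many fractions; both are consequences of the two-sided Ore property already in force.
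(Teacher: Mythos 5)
Your proposal is correct. For (i), (ii) and (iv) it runs essentially parallel to the paper: the paper also settles (i) by citing standard Ore-localization theory (Proposition 0.8.1 of \cite{pmcohn}, where you invoke \cite{Lam}), proves (ii) with exactly your lifting argument, and deduces (iv) formally from the earlier parts. The genuine difference is (iii). The paper disposes of it in one line by citing Corollary 0.8.5 of \cite{pmcohn} (any finitely generated torsionfree module over a left and right Ore domain embeds into a free module), whereas you reprove this fact from scratch: embed ${\rm f}(\cM)$ into $V\cong\cO(\Gamma)^d$ using (i) and (ii), write the coordinates of the images of finitely many generators as right fractions, produce by induction on the right Ore condition a common nonzero right denominator $t\in\Z\Gamma$, and observe that coordinatewise right multiplication by $t$ is injective and left $\Z\Gamma$-linear, so it carries ${\rm f}(\cM)$ into $\Z\Gamma^d$. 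This is precisely the standard proof behind Cohn's corollary, and your execution is sound: the two-sided Ore property you need is recorded in the same section of the paper (Tamari's theorem), right scaling commutes with the left module structure by associativity, and injectivity of $R_t$ follows since $\cO(\Gamma)$ is a skew field. What your route buys is self-containedness -- the only nontrivial external input left is the identification of $\cO(\Gamma)\otimes_{\Z\Gamma}\cM$ with the module of fractions used in (i); what the paper's citation buys is brevity. A minor remark: for (iv) the paper appeals to (i) and (ii), but your derivation from (i) alone is already complete, since $\cO(\Gamma)\otimes_{\Z\Gamma}\cM$ is generated over $\cO(\Gamma)$ by the image of the canonical map, and a vector space over a skew field vanishes exactly when its dimension is zero.
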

\begin{proof}
(i) This is Proposition 0.8.1 in \cite{pmcohn}.
(ii) Let $f \in \cM$ and suppose that there exist a non-zero $a \in \Z \Gamma$, such that $af=0$. Let $f'$ be any lift of $f$ to $\cM$. Then, $af' \in {\rm tor}(\cM)$ and there exists $b \in \Z \Gamma$ non-zero, such that $baf'=0$. Hence $f' \in {\rm tor}(\cM)$ and hence $f=0$. This proves the second claim. (iii) Cohn proved that any finitely generated torsionfree module over a left and right Ore domain embeds into a free module \cite[Corollary 0.8.5]{pmcohn}. (iv) follows from (ii) and (i).
\end{proof}

Statement (ii) in the preceding lemma solves Exercise 19 on page 318 of \cite{Lam}. We can now relate the concept of torsion submodule to finiteness of our numerical torsion invariant. Recall that ${\rm rk}(\cM) \neq 0$ implies $\rho(\cM) = \infty$.

\begin{thm} \label{L-torsion has finite entropy}
Let $\Gamma$ be a countable  amenable group such that $\Z\Gamma$ has no zero-divisors. Let $\cM$ be a finitely generated left $\Z\Gamma$-module. Then
$\tor(\cM)$ is the largest submodule of $\cM$ with finite torsion.
\end{thm}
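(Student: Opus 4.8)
The statement splits into two halves: that $\tor(\cM)$ itself has finite torsion, i.e.\ $\rho(\tor(\cM)) < \infty$, and that every submodule $\cN \subseteq \cM$ with $\rho(\cN) < \infty$ is contained in $\tor(\cM)$. The second half is the easy one, so I would dispose of it first. If $\rho(\cN) < \infty$, then since $\rk(\cN) \neq 0$ forces $\rho(\cN) = \infty$, we must have $\rk(\cN) = 0$; by Lemma~\ref{lemmod}(iv) this means $\tor(\cN) = \cN$, i.e.\ $\cN$ is a torsion module. As any element of $\cN$ killed by a non-zero $a \in \Z\Gamma$ is a fortiori such an element of $\cM$, we get $\cN = \tor(\cN) \subseteq \tor(\cM)$, which gives the asserted maximality.

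The real content is the finiteness $\rho(\tor(\cM)) < \infty$. As a warm-up I would record that every \emph{finitely generated} torsion module $T$ has finite torsion: filtering $T$ by the submodules generated by successive generators reduces, via Yuzvinski\u{\i}'s additivity \eqref{yuz}, to the cyclic case $T = \Z\Gamma/\cJ$ with $\cJ \neq 0$; picking $0 \neq f \in \cJ$ exhibits $T$ as a quotient of $\Z\Gamma/\Z\Gamma f$, so $\rho(T) \leq \rho(\Z\Gamma/\Z\Gamma f) = \log\det\!{}_\Gamma(f) < \infty$. This already settles the case $\rk(\cM) = 0$, where $\tor(\cM) = \cM$ is finitely generated. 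The difficulty in general is precisely that $\Z\Gamma$ need not be noetherian, so $\tor(\cM)$ need not be finitely generated and this direct argument is unavailable.

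To get around this I would approximate $\tor(\cM)$ from the outside by a single Fuglede--Kadison determinant. Write $\cM = \Z\Gamma^{\oplus m}/K$ and let $\widetilde K \subseteq \Z\Gamma^{\oplus m}$ be the saturation of $K$, so that $\tor(\cM) = \widetilde K/K$ by Lemma~\ref{lemmod}(i). Choosing $r = \rk(K)$ elements $v_1,\dots,v_r \in K$ that are linearly independent over $\cO(\Gamma)$, the submodule $K_0 := \sum_i \Z\Gamma\, v_i$ is free of rank $r$ and, since $K/K_0$ is torsion, has the same saturation $\widetilde K$; because $\widetilde K/K_0$ surjects onto $\widetilde K/K = \tor(\cM)$, it suffices to bound $\rho(\widetilde K/K_0)$. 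Now complete $v_1,\dots,v_r$ to an $\cO(\Gamma)$-basis of $\cO(\Gamma)^m$ by vectors $w_1,\dots,w_{m-r} \in \Z\Gamma^{\oplus m}$ (say suitable standard basis vectors) whose span is complementary to $\cO(\Gamma)\otimes_{\Z\Gamma} \widetilde K$, and let $A \in M_m(\Z\Gamma)$ send the standard basis to $v_1,\dots,v_r,w_1,\dots,w_{m-r}$. Then $A$ has full rank over $\cO(\Gamma)$, hence is a non-zero-divisor. The key identity $\widetilde K \cap A\,\Z\Gamma^{\oplus m} = K_0$ is then a short check: any element of the left side is $k + w$ with $k \in K_0 \subseteq \widetilde K$ and $w \in \sum_j \Z\Gamma\, w_j$, whence $w \in \widetilde K$; but over $\cO(\Gamma)$ the span of the $w_j$ meets $\cO(\Gamma)\otimes_{\Z\Gamma}\widetilde K$ only in $0$, forcing $w = 0$. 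Consequently
$$\widetilde K/K_0 = \widetilde K/(\widetilde K \cap A\,\Z\Gamma^{\oplus m}) \cong (\widetilde K + A\,\Z\Gamma^{\oplus m})/A\,\Z\Gamma^{\oplus m} \subseteq \Z\Gamma^{\oplus m}/A\,\Z\Gamma^{\oplus m},$$
and the right-hand module has torsion $\log\det\!{}_\Gamma(A) < \infty$ by \cite{lithom}. By additivity of $\rho$ (monotonicity under submodules and quotients) this yields $\rho(\tor(\cM)) = \rho(\widetilde K/K) \leq \rho(\widetilde K/K_0) \leq \log\det\!{}_\Gamma(A) < \infty$.

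The main obstacle is exactly this finiteness statement, and within it the failure of noetherianity: the whole purpose of completing the free submodule $K_0$ to the \emph{square} non-zero-divisor $A$ is to replace the possibly non-finitely-generated module $\tor(\cM)$ by a genuine submodule of the cokernel of $A$, whose torsion is the finite determinant $\log\det\!{}_\Gamma(A)$. The one computation I would verify with care is the identity $\widetilde K \cap A\,\Z\Gamma^{\oplus m} = K_0$, since it is what upgrades the comparison from a mere subquotient to an honest submodule of $\Z\Gamma^{\oplus m}/A\,\Z\Gamma^{\oplus m}$ and thereby makes the final monotonicity estimate legitimate.
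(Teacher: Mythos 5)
Your argument is correct, but it takes a genuinely different route from the paper's. For maximality you pass through $\rk$ and Lemma \ref{lemmod}(iv), while the paper argues directly on elements: if $x\in\cN$ is not torsion then $\Z\Gamma x\cong\Z\Gamma$, so $\rho(\cN)\geq\rho(\Z\Gamma x)=\infty$; the two arguments are equivalent in substance. For the finiteness $\rho(\tor(\cM))<\infty$, the paper does not use your linear algebra over $\cO(\Gamma)$ at all: it inducts on the number of generators of $\cM$, observing that if $\cM'\subseteq\cM$ is generated by all but one generator, then $\tor(\cM)\cap\cM'=\tor(\cM')$ and the quotient map $\pi$ sends $\tor(\cM)$ into $\tor(\cM/\cM')$, whence Yuzvinski\u{\i} additivity gives $\rho(\tor(\cM))\le\rho(\tor(\cM'))+\rho(\tor(\cM/\cM'))$, with the cyclic base case handled exactly as in your warm-up paragraph. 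That inductive proof is more elementary: it needs only the scalar formula $\rho(\Z\Gamma/\Z\Gamma a)=\log\det\!{}_{\Gamma}(a)<\infty$ plus additivity, and it never mentions presentations or saturations. Your proof instead realizes $\tor(\cM)$ as a subquotient (a quotient of a genuine submodule, thanks to your identity $\widetilde K\cap A\,\Z\Gamma^{\oplus m}=K_0$) of the cokernel of a single square non-zero-divisor $A\in M_m(\Z\Gamma)$, which buys two things the paper's proof does not give: an explicit uniform bound $\rho(\tor(\cM))\le\log\det\!{}_{\Gamma}(A)$ computable from a presentation, and the structural fact that $\tor(\cM)$, though possibly not finitely generated, is always a subquotient of a finitely presented module of type FL. The price is invoking the matrix version of the Li--Thom determinant formula, which the paper relies on only in a passing remark; note, however, that you could avoid it entirely: since $A$ is invertible over $\cO(\Gamma)$, each standard generator of ${\rm coker}(A)$ is torsion (clear denominators using the Ore condition), so ${\rm coker}(A)$ is a finitely generated torsion module and your own warm-up already bounds its torsion -- at which point your proof consumes exactly the same external input as the paper's.
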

\begin{proof} Let $\cM'$ be a submodule of $\cM$ with finite torsion $\rho(\cM')$. Let $x\in \cM'$. Then $\rho({\Z\Gamma x})\le \rho({\cM'})<\infty$.
If $x\not\in \tor(\cM)$, then $\Z\Gamma x$ is isomorphic to $\Z\Gamma$ as left $\Z\Gamma$-modules and hence $\rho({\Z\Gamma x})=\infty$.
Thus $x\in \tor(\cM)$. Therefore $\cM'\subseteq \tor(\cM)$.

Next we show $\rho({\tor(\cM)})<\infty$ by induction on the number of generators of $\cM$. Consider first the case $\cM$ is generated by some element $y$. If $y\not \in \tor(\cM)$, then $\tor(\cM)=0$ and hence $\rho({\tor(\cM)})=0$. If $y\in \tor(\cM)$, then $ay=0$ for some nonzero $a\in \Z\Gamma$, and hence $\rho({\cM})\le \rho({\Z\Gamma/\Z\Gamma a})<\infty$. Now suppose that $\rho({\tor(\cM)})<\infty$ for any left $\Z\Gamma$-module
generated by $n$ elements. Let $\cM$ be a left $\Z\Gamma$-module generated by $n+1$ elements $y_1, \dots, y_{n+1}$. Denote by $\cM'$ the submodule of $\cM$ generated by $y_1, \dots, y_n$. Then $\cM/\cM'$ is generated by one element. By induction hypothesis we have $\rho({\tor(\cM')}), \rho({\tor(\cM/\cM')})<\infty$. Note that $\tor(\cM)\cap \cM'=\tor(\cM')$, and the quotient map $\pi:\cM\rightarrow \cM/\cM'$ sends $\tor(\cM)$ into $\tor(\cM/\cM')$. From Yuzvinski\u{\i}'s Addition Formula we get
$$ \rho({\tor(\cM)})=\rho({\tor(\cM')})+\rho({\pi(\tor(\cM))})\le \rho({\tor(\cM')})+\rho({\tor(\cM/\cM')})<\infty.$$
This finishes the induction step.
\end{proof}

\begin{cor}
Let $\cM$ be a finitely generated $\Z \Gamma$-module such that every non-zero submodule $\cN \subset \cM$ has $\rho(\cN) = \infty$. Then, there exists an embedding $\cM \subset \Z \Gamma^{\oplus n}$ for some $n \in \N$.
\end{cor}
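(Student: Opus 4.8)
The plan is to reduce everything to the embedding statement already recorded in Lemma \ref{lemmod}(iii), whose only input is that $\cM$ be finitely generated and torsionfree over the Ore domain $\Z\Gamma$. So the real content to establish is that the hypothesis ``every non-zero submodule has infinite torsion'' forces $\tor(\cM)=0$.

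First I would invoke Theorem \ref{L-torsion has finite entropy}, which applies since we are in its standing setting (a countable amenable $\Gamma$ with $\Z\Gamma$ free of zero-divisors). That theorem asserts that $\tor(\cM)$ is the \emph{largest} submodule of $\cM$ with finite torsion; in particular it is \emph{a} submodule with finite torsion, so $\rho(\tor(\cM))<\infty$. Confronting this with the hypothesis of the corollary, every non-zero submodule $\cN\subset\cM$ has $\rho(\cN)=\infty$, I conclude that $\tor(\cM)$ cannot be non-zero. Hence $\tor(\cM)=0$, which by Lemma \ref{lemmod}(iv) (or directly from the definition) means $\cM$ is torsionfree and $\cM={\rm f}(\cM)$.

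Finally, since $\cM$ is finitely generated and torsionfree, Lemma \ref{lemmod}(iii) yields the desired embedding $\cM={\rm f}(\cM)\hookrightarrow \Z\Gamma^{\oplus n}$ for some $n\in\N$; this is the step that silently carries Cohn's theorem that a finitely generated torsionfree module over a left and right Ore domain embeds into a free module, which is legitimate because $\Z\Gamma$ satisfies the Ore condition by the Tamari proposition above.

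There is essentially no obstacle here: the argument is a two-line consequence of the finiteness statement in Theorem \ref{L-torsion has finite entropy} together with the structural Lemma \ref{lemmod}. The only point requiring a moment's care is bookkeeping of the standing hypotheses, namely that one is tacitly assuming $\Z\Gamma$ has no zero-divisors so that $\tor(\cM)$, the Ore localization, and the embedding result are all available; granting that, the proof is immediate.
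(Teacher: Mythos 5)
Your proposal is correct and is essentially identical to the paper's own proof: the paper also deduces $\cM={\rm f}(\cM)$ from Theorem \ref{L-torsion has finite entropy} (since $\tor(\cM)$ has finite torsion, the hypothesis forces it to vanish) and then concludes via Lemma \ref{lemmod}(iii). Your remark about the standing hypothesis that $\Z\Gamma$ has no zero-divisors is the same tacit assumption the paper makes in this section, so nothing is missing.
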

\begin{proof} By the previous theorem, we have $\cM={\rm f}(\cM)$. Hence, the claim is implied by Lemma \ref{lemmod} (iii).\end{proof}

\begin{remark}
For $G=\{e\}$, the preceding result just says that the torsion subgroup of a finitely generated abelian group is finite.
\end{remark}

\section{Asphericity of 2-complexes}

Let us finish the article by giving some non-trivial applications of the torsion, as an invariant of $\Z \Gamma$-modules. The applications can also be proved using $\ell^2$-invariants and Hilbert space methods rather than entropy and ergodic theoretic methods.

Now we recall the definition of group of type FL. A group $\Gamma$ is of type FL if it admits a finite resolution over $\Z\Gamma$:
$$0\rightarrow \Z\Gamma^{\oplus n_m}\rightarrow ...\rightarrow\Z\Gamma^{\oplus n_1}\rightarrow \Z\Gamma^{\oplus n_0}\rightarrow \Z\rightarrow 0.$$ If $\Gamma$ is of type FL then $\Gamma$ must be torsion free \cite[Corollary VIII.2.5]{brown}, and in particular it is infinite if it is non-trivial.
The following result is due to Cheeger-Gromov \cite{cg}. Using torsion, we can give a new argument.

\begin{proposition} \label{euler}
The Euler characteristic of a non-trivial amenable group $\Gamma$ of type FL vanishes.
\end{proposition}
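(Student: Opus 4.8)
The plan is to identify the Euler characteristic of $\Gamma$ with the von Neumann rank of the trivial module $\Z$ and then to squeeze this rank into the interval $[0,1)$, forcing it to be $0$. Fix a finite free resolution
$$0\to \Z\Gamma^{\oplus n_m}\to\cdots\to \Z\Gamma^{\oplus n_0}\to \Z\to 0$$
witnessing that $\Gamma$ is of type FL, so that by definition $\chi(\Gamma)=\sum_{i}(-1)^i n_i$. Splitting this resolution into short exact sequences and applying the additivity of the von Neumann rank (item (1) of the list in Section 2) together with $\rk(\Z\Gamma)=1$ (item (3)), I would obtain
$$\rk(\Z)=\sum_{i=0}^{m}(-1)^i\,\rk(\Z\Gamma^{\oplus n_i})=\sum_{i=0}^{m}(-1)^i n_i=\chi(\Gamma).$$
In particular $\chi(\Gamma)=\rk(\Z)$ is a \emph{non-negative integer}, being at once an alternating sum of the $n_i$ and a value of L\"uck's dimension function.

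Next I would strictly bound $\rk(\Z)$ below $1$ using the augmentation sequence
$$0\to \ker(\varepsilon)\to \Z\Gamma\stackrel{\varepsilon}{\longrightarrow}\Z\to 0,$$
where $\varepsilon$ denotes the augmentation. Additivity of the rank gives $1=\rk(\Z\Gamma)=\rk(\ker\varepsilon)+\rk(\Z)$. Since $\Gamma$ is non-trivial, there is some $g\neq 1$, and then $g-1$ is a non-zero element of $\ker(\varepsilon)$; hence $\ker(\varepsilon)$ is a non-zero submodule of $\Z\Gamma$, and item (4) of the list in Section 2 forces $\rk(\ker\varepsilon)>0$. Therefore $\rk(\Z)=1-\rk(\ker\varepsilon)<1$. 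Combining this with the previous paragraph, $\chi(\Gamma)=\rk(\Z)$ is a non-negative integer that is strictly smaller than $1$, whence $\chi(\Gamma)=0$.

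The main obstacle, and the only place where amenability is genuinely used, is the additivity of $\rk$ on short exact sequences of $\Z\Gamma$-modules, which underlies the identity $\rk(\Z)=\chi(\Gamma)$. For general groups $L(\Gamma)\otimes_{\Z\Gamma}-$ is merely right exact, and the alternating sum of the ranks of a resolution need not equal $\rk(\Z)$; indeed, applying the naive computation to a genus-$g$ surface group would spuriously yield $\rk(\Z)=2-2g<0$, which is impossible for a dimension. It is precisely the amenability of $\Gamma$ (through the dimension-flatness that makes the higher $\mathrm{Tor}$-terms negligible) that validates item (1) of Section 2 and hence the whole argument; once this is in hand, the remaining steps are entirely formal.
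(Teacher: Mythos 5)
Your proof is correct, but it takes a genuinely different route from the paper's. The paper argues with the \emph{torsion} invariant $\rho$ rather than the von Neumann rank: it first notes that a non-trivial group of type FL is torsion-free and hence infinite, then tensors the finite free resolution $\cC_* \to \Z$ with $\Z/2\Z$ to obtain a finite free resolution of the trivial $(\Z/2\Z)\Gamma$-module, and applies Yuzvinski\u{\i}--Li additivity of $\rho$ together with $\rho\bigl((\Z/2\Z)\Gamma\bigr)=\log 2$ to get $\chi(\Gamma)\cdot\log 2=\rho(\Z/2\Z)$; the right-hand side vanishes because an infinite amenable group acting on the finite group $\widehat{\Z/2\Z}$ has zero entropy. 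You instead stay with the primary invariant $\rk$: the identity $\rk(\Z)=\chi(\Gamma)$ is exactly the computation the paper records in Section 2 for modules of type FL, and your squeeze $0\le\rk(\Z)<1$ via the augmentation ideal and property (4) mirrors the paper's own Farkas--Snider argument (Theorem \ref{zdcZ}) rather than its proof of this proposition. Each approach rests on a deep additivity theorem: yours on L\"uck's dimension-flatness of $L(\Gamma)$ over $\Z\Gamma$ for amenable $\Gamma$ (property (1) of Section 2), the paper's on Li's addition formula for entropy; and your closing remark correctly identifies dimension-flatness as the one place where amenability enters. Your route has the small advantage that non-triviality of $\Gamma$ is used directly (a non-zero augmentation ideal), whereas the paper must first invoke torsion-freeness of type FL groups to know $\Gamma$ is infinite, since $\rho(\Z/2\Z)=0$ fails for finite groups. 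The paper's route has the thematic advantage it advertises: it shows that the secondary invariant $\rho$ alone, with no reference to $L(\Gamma)$-dimension theory, suffices to recover the Cheeger--Gromov vanishing.
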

\begin{proof}
By the previous remarks, $\Gamma$ must be infinite. Now, let
$\cC_* \to \Z$ be a finite resolution of $\Z$ by finitely generated free $\Z \Gamma$-modules. Then, $ (\Z/2 \Z) \otimes_{\Z}\cC_{*} \to \Z/2 \Z$ is exact. By additivity of torsion, $\chi(\Z) \cdot \log 2=\rho(\Z/2 \Z)=0$. Hence, the Euler characteristic of $\Gamma$ vanishes.
\end{proof}

The following theorem is a particular case of Theorem 3.2 in \cite{bh}. Using the torsion invariant, we can give an elementary proof of this result.

\begin{thm}
Let $\Gamma$ be an amenable group. The following are equivalent:
\begin{enumerate}
\item The group $\Gamma$ admits a finite classifying space of dimension two.
\item There exists a natural number $n$, such that the group $\Gamma$ can be defined with $n$ generators and $n-1$ relations.
\end{enumerate}
\end{thm}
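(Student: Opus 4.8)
The plan is to read both conditions off the cellular chain complex of the universal cover of a finite $2$-complex, using additivity of the Euler characteristic (through the torsion, Proposition~\ref{euler}) for the forward implication and additivity of the von Neumann rank for the backward one. We may assume $\Gamma$ is non-trivial; a non-trivial amenable group with a finite-dimensional classifying space, or with a presentation having more generators than relators, is necessarily infinite, so Proposition~\ref{euler} applies.

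For $(1)\Rightarrow(2)$ I would start from a finite aspherical $2$-complex $Y$ with $\pi_1(Y)\cong\Gamma$ and collapse a maximal tree in its $1$-skeleton. This is a homotopy equivalence, so the resulting complex $X$ is again a finite aspherical $2$-complex, now with a single $0$-cell, some $n$ $1$-cells and $m$ $2$-cells; reading off the cells gives a presentation of $\Gamma$ with $n$ generators and $m$ relators. Since $X$ is a finite $K(\Gamma,1)$, the group $\Gamma$ is of type FL and $\chi(\Gamma)=\chi(X)=1-n+m$. By Proposition~\ref{euler} the Euler characteristic of the infinite amenable group $\Gamma$ vanishes, whence $m=n-1$, which is exactly $(2)$.

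For $(2)\Rightarrow(1)$ I would take the presentation $2$-complex $X$ of a presentation with $n$ generators and $n-1$ relators. Then $\pi_1(X)\cong\Gamma$, and since a $2$-complex is aspherical precisely when $\pi_2$ vanishes, it suffices to prove $\pi_2(X)=0$. As $\widetilde X$ is simply connected, its cellular chain complex is a resolution
$$0 \to \pi_2(X) \to \Z\Gamma^{n-1} \xrightarrow{\partial_2} \Z\Gamma^n \xrightarrow{\partial_1} \Z\Gamma \xrightarrow{\varepsilon} \Z \to 0$$
of the trivial module, with $\pi_2(X)=\ker\partial_2$ and $\operatorname{im}\partial_1$ equal to the augmentation ideal $I=\ker\varepsilon$. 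Truncating at $I$ and applying additivity of the rank (Property~(1) together with $\rk(\Z\Gamma)=1$ from Property~(3)) gives, because $I\subseteq\Z\Gamma$ forces $\rk(I)\le1$,
$$\rk(\pi_2(X)) = (n-1)-n+\rk(I) = \rk(I)-1 \le 0,$$
so $\rk(\pi_2(X))=0$ by non-negativity of the rank.

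It remains to upgrade $\rk(\pi_2(X))=0$ to $\pi_2(X)=0$, and this is the step I expect to be the main obstacle: the rank only detects the module after passing to the Ore localization, so a non-zero rank-zero submodule could a priori survive. Here I would invoke that $\Z\Gamma$ has no zero-divisors --- available for the torsionfree elementary amenable groups through Theorem~\ref{eleman}, and the substantive hypothesis hiding behind amenability in general. Then $\pi_2(X)=\ker\partial_2$ is a submodule of the free module $\Z\Gamma^{n-1}$, hence torsionfree, so $\tor(\pi_2(X))=0$; but Lemma~\ref{lemmod}(iv) says a module of rank zero coincides with its torsion submodule, forcing $\pi_2(X)=\tor(\pi_2(X))=0$. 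Thus $X$ is the desired finite two-dimensional $K(\Gamma,1)$. The topological reductions (collapsing a tree, identifying $\pi_2$ with $\ker\partial_2$) and the rank bookkeeping are routine; the genuine content is exactly the zero-divisor input, which is why the result is really a statement about zero-divisor-free amenable groups rather than a formal consequence of additivity.
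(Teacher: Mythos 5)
Your proof of (1)$\Rightarrow$(2) matches the paper's: collapse a maximal tree, read off a presentation with $n$ generators and $m$ relators, and use Proposition~\ref{euler} to force $\chi(\Gamma)=1-n+m=0$. The genuine gap is in (2)$\Rightarrow$(1). You correctly reduce to showing that the rank-zero module $\pi_2(X)=\ker\partial_2\subseteq(\Z\Gamma)^{n-1}$ vanishes, but you close the argument with Lemma~\ref{lemmod}(iv), whose standing hypothesis is that $\Z\Gamma$ has no zero-divisors. That hypothesis is not part of the theorem: for a general amenable $\Gamma$ it is Kaplansky's Zero Divisor Conjecture, open even for torsionfree amenable groups, and false for groups with torsion --- and you cannot assume torsionfreeness here, since in this direction torsionfreeness is a \emph{consequence} of the asphericity (finite cohomological dimension) you are trying to prove. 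So your argument establishes the implication only for, say, torsionfree elementary amenable groups, and your closing claim that the result ``is really a statement about zero-divisor-free amenable groups'' is incorrect: the theorem is unconditional, and the paper proves it unconditionally.

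There are two unconditional ways to finish, and the paper takes the first. In Lemma~\ref{stronger} the rank is replaced by the torsion invariant with coefficients in a finite field $k$: the $\Gamma$-equivariant chain complex of $\tilde X$ gives an exact sequence $0\to H_2(\tilde X,k)\to(k\Gamma)^{n-1}\to(k\Gamma)^{n}\to k\Gamma\to k\to 0$, additivity of $\rho$ gives $\rho(H_2(\tilde X,k))=0$, and Lemma~\ref{posent} --- an unconditional counting statement proved via Peters' formula, with no zero-divisor input --- forces $H_2(\tilde X,k)=0$ for every finite field $k$. Hence $\pi_2(X)=H_2(\tilde X,\Z)$ is divisible, and a divisible subgroup of the free abelian group $(\Z\Gamma)^{n-1}$ must vanish. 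Alternatively, your own rank computation can be completed without the Ore localization: property (4) in the paper's list of rank properties --- a nonzero submodule of $\Z\Gamma$ has positive von Neumann rank, which follows from faithfulness of the trace on $L(\Gamma)$, not from absence of zero-divisors --- extends to submodules of $(\Z\Gamma)^{n-1}$ by additivity and induction, and applies directly to $\pi_2(X)$. Your assertion that ``the rank only detects the module after passing to the Ore localization, so a non-zero rank-zero submodule could a priori survive'' is exactly where you went astray: that is true for general modules (e.g.\ $\Z\Gamma/\Z\Gamma f$ can be nonzero of rank zero) but not for submodules of finitely generated free modules, which is the only case you need.
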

\begin{proof}
If $\Gamma$ has a finite classifying space of dimension two, then there must be one more one-cell than two-cells, since $\chi(\Gamma)=0$ by Proposition \ref{euler}. The converse implication follows from the next lemma.
\end{proof}

\begin{definition} \label{cohodef}
The cohomological dimension of $\Gamma$, denoted by $\mbox{cd}(\Gamma)$, is defined to be the smallest integer $n$ such that there exists a projective resolution $0\to P_n\to ...\to P_0\to \Z\to 0$, where $P_i$ are projective $\Z\Gamma$-modules, if there exist such integers $n$; otherwise we set $\mbox{cd}(\Gamma)=\infty$.
\end{definition}
We also have $\mbox{cd}(\Gamma)=\inf\{n: H^i(\Gamma,M)=0, \mbox{ for } i>n \mbox{ and all } \Z\Gamma\mbox{-modules } M\}$ \cite[Section VIII.2]{brown}.

\begin{lem} \label{stronger}
Let $\Gamma = \langle X| R \rangle$ be an amenable group defined with $n$ generators and $n-1$ relations. Then, the presentation $2$-complex associated with the presentation is aspherical. In particular, the group $\Gamma$ has cohomological dimension two.
\end{lem}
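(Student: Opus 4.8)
The plan is to read off asphericity from the cellular chain complex of the universal cover of the presentation $2$-complex, and to prove that its top homology—which is precisely $\pi_2$—vanishes by exploiting the additivity and positivity of the torsion invariant after reducing coefficients modulo each prime. Write $K$ for the presentation $2$-complex of $\langle X \mid R\rangle$, with one $0$-cell, $n$ one-cells and $n-1$ two-cells, and let $\tilde K \to K$ be the universal cover. Asphericity of $K$ is equivalent to contractibility of $\tilde K$, and since $\tilde K$ is simply connected and two-dimensional, the Hurewicz theorem reduces this to showing $\pi_2(K) = H_2(\tilde K)=0$. The cellular chain complex of $\tilde K$ is a complex of free left $\Z\Gamma$-modules
$$0 \to \Z\Gamma^{n-1} \xrightarrow{\partial_2} \Z\Gamma^{n} \xrightarrow{\partial_1} \Z\Gamma \xrightarrow{\epsilon} \Z \to 0,$$
which is exact everywhere except at $\Z\Gamma^{n-1}$, where its homology equals $\pi_2(K) = \ker\partial_2$.

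Next I would reduce modulo a prime $p$. Every module occurring above, including $\ker\partial_2$, is a subgroup of a free abelian group and hence torsion-free over $\Z$; therefore tensoring the sequence with $\F_p=\Z/p\Z$ over $\Z$ preserves exactness and produces an exact sequence of $\F_p\Gamma$-modules
$$0 \to \ker(\partial_2)\otimes_{\Z}\F_p \to \F_p\Gamma^{n-1} \to \F_p\Gamma^{n} \to \F_p\Gamma \to \F_p \to 0.$$
All torsions in this sequence are finite, since each term is a submodule of a finitely generated free $\F_p\Gamma$-module, so the additivity formula \eqref{yuz} may be applied repeatedly. Using $\rho(\F_p\Gamma^{k}) = k\log p$ (Corollary \ref{sum} together with $\rho(k\Gamma)=\log|k|$) and $\rho(\F_p)=0$ (the dual of $\F_p$ is finite, so the shift has zero entropy), the alternating sum of torsions collapses and yields $\rho\bigl(\ker(\partial_2)\otimes_{\Z}\F_p\bigr)=0$.

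The remaining step is to pass from this numerical vanishing to the actual vanishing of $\ker\partial_2$. The module $\ker(\partial_2)\otimes_{\Z}\F_p$ sits inside $\F_p\Gamma^{n-1}$, and I would first record a routine extension of Lemma \ref{posent}: a nonzero submodule of $\F_p\Gamma^{n-1}$ has strictly positive torsion, seen by projecting a nonzero element onto a coordinate where it survives and comparing with the positivity of torsion for nonzero submodules of $\F_p\Gamma$. Hence $\ker(\partial_2)\otimes_{\Z}\F_p = 0$, which says that $\ker\partial_2$ is $p$-divisible, and this holds for every prime $p$. A torsion-free abelian group that is $p$-divisible for all $p$ is divisible, whereas the free abelian group $\Z\Gamma^{n-1}$ contains no nonzero divisible subgroup; therefore $\ker\partial_2=0$. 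This gives $\pi_2(K)=0$, so $\tilde K$ is contractible and $K$ is aspherical. Finally, the chain complex above becomes a length-two free resolution of $\Z$ over $\Z\Gamma$, so ${\rm cd}(\Gamma)\le 2$, with equality unless $\Gamma$ is infinite cyclic (by Stallings--Swan, a group of cohomological dimension at most one is free, and the only amenable free groups are trivial and $\Z$).

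The main obstacle is exactly the transition from $\rho\bigl(\ker(\partial_2)\otimes_{\Z}\F_p\bigr)=0$ to $\ker\partial_2=0$: the torsion invariant only sees each prime separately, so one must both upgrade the positivity Lemma \ref{posent} to free modules of higher rank and then synthesize the mod-$p$ information across all primes, which is where the divisibility argument inside the free abelian group $\Z\Gamma^{n-1}$ does the decisive work.
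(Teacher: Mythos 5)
Your proposal follows essentially the same route as the paper's proof: identify $\pi_2$ with $H_2(\tilde K;\Z)=\ker\partial_2$ by Hurewicz, pass to coefficients in $\F_p$ (you tensor the exact sequence directly and invoke $\Z$-torsion-freeness; the paper invokes the universal coefficient theorem --- the same mechanism), deduce $\rho(\ker\partial_2\otimes_\Z\F_p)=0$ from additivity, upgrade Lemma \ref{posent} to submodules of $(\F_p\Gamma)^{n-1}$ by coordinate projection, and finish by observing that $\ker\partial_2$ is then a divisible subgroup of a free abelian group, hence zero. All of this matches the paper's argument step for step.

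The one genuine gap is your claim that $\rho(\F_p)=0$ because ``the dual of $\F_p$ is finite, so the shift has zero entropy.'' The $\Gamma$-action on $\widehat{\F_p}\cong\Z/p\Z$ is the \emph{trivial} action on a finite group, not a shift, and its entropy vanishes only when $\Gamma$ is infinite; for finite $\Gamma$ it equals $\log p/|\Gamma|>0$, and your alternating sum would then output a negative torsion. Nothing in your argument up to that point rules out finite $\Gamma$, so as written the step is unjustified. This is precisely why the paper's proof spends its first paragraph proving that $\Gamma$ is infinite: $\chi(X)=0$ together with ${\rm rk}_\Z H_2(X)\le n-1$ forces ${\rm rk}_\Z H_1(X)\ge 1$, so $\Gamma$ surjects onto $\Z$. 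There are two easy repairs: (i) insert that argument (a group with $n$ generators and $n-1$ relations has abelianization of positive rank, since the abelianized relation matrix $\Z^{n-1}\to\Z^n$ cannot have finite cokernel); or (ii) observe that Yuzvinski\u{\i} additivity actually gives $\rho(\ker\partial_2\otimes_\Z\F_p)+\rho(\F_p)=0$ \emph{before} any evaluation of $\rho(\F_p)$, and since torsion is non-negative both summands must vanish --- so infiniteness of $\Gamma$ becomes an output rather than an input. A minor further point: ``cohomological dimension two'' in the statement should be read as ${\rm cd}(\Gamma)\le 2$; your Stallings--Swan remark correctly handles the exceptional case $\Gamma\cong\Z$ (e.g.\ $n=1$), which the paper's formulation glosses over.
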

\begin{proof}In order to show that the presentation $2$-complex $X$ is aspherical, it is enough to show that $\pi_2(X)=0$.
Since $X$ has one cell in dimension zero, $n$ cells in dimension one and $n-1$ cells in dimension two, we have $\chi(X)= 1 - n + (n-1)=0$. The first homology of $X$ is computed by a complex of the form
$$0 \to \Z^{n-1} \to \Z^n \to \Z \to 0.$$
Since $H_0(X)=\Z$ and ${\rm rk}_{\Z} H_2(X) \leq n-1$, we conclude from $\chi(X)=0$ that ${\rm rk}_{\Z }H_1(X) \neq 0$. Hence, $\Gamma=\pi_1(X)$ surjects onto $\Z$. We conclude that $\Gamma$ is infinite.

We denote by $\tilde X$ the universal cover of $X$.
Note that $H_1(\tilde X,\Z) = 0$ and by Hurewicz' theorem
$$\pi_2(X) = \pi_2(\tilde X) = H_2(\tilde X,\Z).$$
Let $k$ be a finite field. By the universal coefficient theorem
$$0 \rightarrow H_i(\tilde X, \Z)\otimes_{\Z} k\rightarrow H_i(\tilde X,k)\rightarrow {\rm Tor}(H_{i-1}(\tilde X, \Z),k)\rightarrow 0,$$
so that we conclude that there are natural isomorphisms
\begin{equation} \label{pi=h}
\pi_2(X) \otimes_{\Z} k = H_2(\tilde X,\Z) \otimes_{\Z} k = H_2(\tilde X,k).
\end{equation}
Then, computing the homology of $\tilde X$ with coefficients in $k$ using that $\Gamma$-equivariant cellular chain complex of $\tilde X$, we get an exact sequence
$$0 \to H_2(\tilde X,k) \to (k \Gamma)^{n-1} \to (k \Gamma)^{n} \to k \Gamma \to k \to 0$$
of left $\Z \Gamma$-modules. We conclude that the torsion of $H_2(\tilde X,k)$ as a $\Z\Gamma$-module vanishes. Since, $H_2(\tilde X,k) \subseteq (k \Gamma)^{n-1}$, we conclude from Lemma \ref{posent} and Yuzvinski\u{\i} addition formula that $0=H_2(\tilde X,k) = H_2(\tilde X,\Z) \otimes_{\Z} k$. In particular, $H_2(\tilde X,\Z)$ is a divisible as an abelian group.
Computing the homology of $\tilde X$ with coefficients in $\Z$, we get an exact sequence
$$0 \to H_2(\tilde X,\Z) \to (\Z \Gamma)^{d_2} \to (\Z \Gamma)^{d_1} \to (\Z \Gamma)^{d_0} \to \Z \to 0.$$
Thus, $H_2(\tilde X,\Z)$ is a subgroup of $(\Z\Gamma)^{d_2}$. Since every divisible subgroup of a free abelian group must vanish, we conclude that $H_2(\tilde X,\Z)=0$. This shows that $\pi_2(X)=0$ and hence, $X$ is aspherical.
\end{proof}

\begin{remark}
For the proof of Lemma \ref{stronger}, it is enough to assume that $\beta_1^{(2)}(\Gamma)=0$, see \cite{bh}.
\end{remark}

It is natural to ask which examples of amenable groups of cohomological dimension two exist. We denote by ${\rm rk}(\Gamma)$ the minimal cardinality of generating set of $\Gamma$. In view of the available examples, it is natural to ask the following question:
\begin{question}
\label{Q-deficiency}
Does every amenable group $\Gamma$ of cohomological dimension 2 satisfy ${\rm rk}(\Gamma)\leq 2$?
\end{question}
In the case of elementary amenable groups, Question \ref{Q-deficiency} has an affirmative answer. The class EG of elementary amenable groups is the smallest class of groups containing all finite groups, abelian groups, and is closed under extension and directed unions \cite{Chou}. Clearly, virtually solvable groups are elementary amenable. An alternative description of EG was introduced in \cite{MR964842} as follows. If $\mathcal{X},\mathcal{Y}$ are classes of groups, let $\mathcal{XY}$ denote the class of groups $G$ such that there exists an exact sequence $1\to N\to G\to H\to 1$ with $N$ in $\mathcal{X}$ and $H$ in $\mathcal{Y}$. Let $L\mathcal{X}$ denote the class of groups $G$ such that each finite subset of $G$ is contained in some $\mathcal{X}$-subgroup. Let $\mathcal{X}_1$ the class of finitely generated virtually abelian groups and for any ordinal $\alpha$, define $\mathcal{X}_\alpha$ inductively as follows: $\mathcal{X}_0=\{1\}$, $\mathcal{X}_{\alpha}=(L\mathcal{X}_{\alpha-1})\mathcal{X}_1$ if $\alpha$ is a successor ordinal, and $\mathcal{X_\beta}=\bigcup_{\alpha<\beta}\mathcal{X}_\alpha$ if $\beta$ is a limit ordinal. Then each $\mathcal{X}_\alpha$ is subgroup closed and ${\rm EG}=\bigcup_{\alpha\geq 0}\mathcal{X}_\alpha$ \cite[Lemma 3.1]{MR964842}.

Now we recall briefly the definition of Hirsch length for elementary amenable groups introduced by Hillman \cite{Hillman}. Let $\Gamma$ be an elementary amenable group. Put $o(\Gamma)=\min\{\alpha|\Gamma\in \mathcal{X}_\alpha\}$. If $\Gamma$ is in $\cX_1$, that is it has a finite index finitely generated abelian subgroup $A$, define $h(\Gamma)={\rm rk}_{\Z}A$. Suppose that the Hirsch length has been defined for all groups in $\cX_\alpha$ and $o(\Gamma)=\alpha+1$. If $N\in L\cX_\alpha$, let $h(N):=\sup\{h(F): F \mbox{ is an }\cX_\alpha\mbox{-subgroup of } N\}$. Finally, if $\Gamma$ is in $\cX_{\alpha+1}$ then it has a normal subgroup $N$ in $L\cX_\alpha $ with quotient in $\cX_1$, define $h(\Gamma)=h(N)+h(\Gamma/N)$. Then $h(\Gamma)$ is well defined for any $\Gamma\in {\rm EG}$ \cite[Theorem 1]{Hillman}.
\begin{thm}
Let $\Gamma$ be an elementary amenable group defined with $n$ generators and $n-1$ relations. Then, it must be $\Z$, $\Z^2$ or Baumslag-Solitar groups $BS(1,k)=\langle a,b| bab^{-1}a^{-k}\rangle$. In particular, ${\rm rk}(\Gamma) \leq 2$.
\end{thm}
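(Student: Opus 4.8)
The plan is to combine the asphericity statement of Lemma~\ref{stronger} with the structure theory of elementary amenable groups of finite cohomological dimension, and to finish with the classification of finitely presented metabelian groups. First I would dispose of the degenerate case: if $n=1$ the single-generator, relation-free presentation gives $\Gamma\cong\Z$, so I assume $n\geq 2$. In that range Lemma~\ref{stronger} shows that the presentation $2$-complex is a \emph{finite} aspherical $2$-complex, whence $\Gamma$ has a finite $K(\Gamma,1)$; in particular ${\rm cd}(\Gamma)=2$, so $\Gamma$ is torsionfree, and (as in the proof of that lemma) $\Gamma$ surjects onto $\Z$. Note also that the presentation is balanced up to one, i.e.\ $\Gamma$ has deficiency $n-(n-1)=1$.

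The second step is to pass from cohomological dimension to Hirsch length. Using the structure theory of elementary amenable groups of finite Hirsch length (Hillman--Linnell) together with the cohomology of soluble groups (Gildenhuys--Strebel), a torsionfree elementary amenable group of finite cohomological dimension is soluble and satisfies $h(\Gamma)={\rm cd}(\Gamma)$. Thus $\Gamma$ is a finitely presented torsionfree soluble group with $h(\Gamma)=2$.

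The third step is the analysis of the derived series. I would show $\Gamma$ is metabelian: the derived subgroup $A:=\Gamma'$ is a torsionfree soluble group of Hirsch length $1$, hence isomorphic to a subgroup of $\Q$ (in particular abelian of rank $1$), while $Q:=\Gamma/\Gamma'$ is a finitely generated abelian group of Hirsch length $1$, so $Q\cong\Z\times T$ with $T$ finite. Torsionfreeness of $\Gamma$ forces $T=\{1\}$: any finite part would act on $A\leq\Q$ through $\operatorname{Aut}(A)\subseteq\Q^{\times}$, whose only torsion is $\pm1$, and either choice would create torsion in $\Gamma$. Hence $Q\cong\Z$, the extension splits, and $\Gamma\cong A\rtimes_{\phi}\Z$ with $\phi\in\operatorname{Aut}(A)$ given by multiplication by some $q=a/b\in\Q^{\times}$ in lowest terms.

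The final step, and the main obstacle, is to pin down $\phi$; here the finite presentability of $\Gamma$ (equivalently, its deficiency being $1$) is essential, because the finitely generated metabelian groups $\Z[\tfrac{1}{ab}]\rtimes_{a/b}\Z$ with $|a|,|b|\geq 2$ are torsionfree of Hirsch length $2$ but are \emph{not} finitely presented. I would invoke the Bieri--Strebel characterization of finitely presented metabelian groups to conclude that $a=\pm1$ or $b=\pm1$; after replacing $\phi$ by $\phi^{-1}$ if necessary, $\phi$ is multiplication by an integer $m\neq 0$ and $A=\Z[\tfrac1m]$. This yields $\Gamma\cong\Z[\tfrac1m]\rtimes_{\times m}\Z=BS(1,m)$, with $m=1$ giving $\Z^{2}$ and $m=-1$ the Klein bottle group $BS(1,-1)$; in every case ${\rm rk}(\Gamma)\leq 2$. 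The points demanding genuine care are the reduction $Q\cong\Z$ and the application of Bieri--Strebel, which is precisely where the finite presentation hypothesis does all the work in excluding the non-finitely-presented metabelian groups of Hirsch length $2$.
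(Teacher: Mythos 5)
Your overall strategy (asphericity $\Rightarrow$ torsionfree of ${\rm cd}\leq 2$ $\Rightarrow$ soluble of Hirsch length $\leq 2$ $\Rightarrow$ explicit metabelian structure $\Rightarrow$ Bieri--Strebel) is viable and genuinely different from the paper, which after deducing solubility from Hillman's results simply quotes Gildenhuys' classification of soluble groups of cohomological dimension two. But your third step contains a real error, and as written it disproves the very theorem you are proving. You claim that torsionfreeness of $\Gamma$ forces the torsion part $T$ of $Q=\Gamma/\Gamma'$ to vanish, so that $\Gamma\cong \Gamma'\rtimes_\phi\Z$. This is false: $BS(1,3)=\langle a,b\mid bab^{-1}a^{-3}\rangle$ is torsionfree, elementary amenable, given by $2$ generators and $1$ relation, yet its abelianization is $\Z\oplus\Z/2\Z$ (the relation forces $a^2=1$ in $H_1$), with $\Gamma'=2\Z[\tfrac13]\subset \Z[\tfrac13]$. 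The flaw in your mechanism is the parenthetical ``either choice would create torsion'': when the finite group $T$ acts \emph{trivially} on $A\leq \Q$, the preimage of $T$ in $\Gamma$ is just a torsionfree abelian group of rank one containing $A$ with finite quotient $T$, and subgroups of $\Q$ have plenty of finite quotients (e.g.\ $\Z[\tfrac13]/2\Z[\tfrac13]\cong \Z/2\Z$); only the $(-1)$-action genuinely produces a contradiction with torsionfreeness. So the deduction $Q\cong\Z$ fails, and your argument would wrongly exclude $BS(1,k)$ for all $k$ with $|k-1|\geq 2$.

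The repair is to abandon the derived subgroup and instead use the surjection $\Gamma\twoheadrightarrow\Z$ that the proof of Lemma \ref{stronger} provides: let $N=\ker(\Gamma\to\Z)$. Then $h(N)=h(\Gamma)-1\leq 1$, $N$ is torsionfree elementary amenable, hence either trivial (giving $\Gamma\cong\Z$) or isomorphic to a subgroup of $\Q$; since the quotient is $\Z$, the extension splits and $\Gamma\cong N\rtimes_\phi\Z$ with $\phi$ multiplication by some $a/b\in\Q^\times$. Finite generation of $\Gamma$ makes $N$ a cyclic $\Z[\phi^{\pm1}]$-module, so $N\cong\Z[\tfrac{1}{ab}]$, and only then does your Bieri--Strebel step (finite presentability forces $|a|=1$ or $|b|=1$) correctly yield $\Z$, $\Z^2$, or $BS(1,m)$. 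Two smaller inaccuracies should also be fixed: for $n\geq 2$ you cannot assert ${\rm cd}(\Gamma)=2$ (e.g.\ $\langle a,b\mid b\rangle\cong\Z$ has ${\rm cd}=1$; say ${\rm cd}(\Gamma)\leq 2$ and handle the free case by Stallings--Swan), and your claim $h(\Gamma')=1$ already fails for $\Z^2$, so the case $h=0$ of the relevant normal subgroup must be treated separately.
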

\begin{proof}
From Lemma \ref{stronger} and Lemma 2 of \cite{Hillman}, one has $h(\Gamma)\leq \mbox{cd}(\Gamma)=2$. Then $\Gamma/T$ is solvable where $T$ is the maximal locally finite normal subgroup of $\Gamma$ \cite[Theorem 2]{Hillman}. On the other hand, since the cohomological dimension is finite, $\Gamma$ is torsion free \cite[Corollary VIII.2.5]{brown} and in particular, $T$ is trivial, and we conclude that $\Gamma$ itself is solvable. Now, Theorem 5 of \cite{MR526863} says that every solvable group of cohomological dimension 2 must be $\Z$, $\Z^2$ or Baumslag-Solitar groups $BS(1,k)=\langle a,b| bab^{-1}a^{-k}\rangle$.
\end{proof}

To the best of our knowledge, there are no known amenable groups which are of type FL and not elementary amenable.

\section*{Acknowledgments}

The second author wants to thank Mark Sapir for a helpful remark on MathOverflow and Mikl\'os Ab\'ert for interesting conversations. Both authors thank Hanfeng Li for letting them include his results in Section \ref{hli} and Lewis Bowen, David Kerr, Simone Virili, and Thomas Ward for helpful comments on a previous version. This research was supported by the European Research Council and the Max-Planck Society.

\begin{bibdiv}
\begin{biblist}

\bib{bh}{article}{
   author={Berrick, Jon},
   author={Hillman, Jonathan},
   title={Perfect and acyclic subgroups of finitely presentable groups},
   journal={J. London Math. Soc. (2)},
   volume={68},
   date={2003},
   number={3},
   pages={683--698},
}

\bib{brown}{book}{
   author={Brown, Kenneth S.},
   title={Cohomology of groups},
   series={Graduate Texts in Mathematics},
   volume={87},
   note={Corrected reprint of the 1982 original},
   publisher={Springer-Verlag},
   place={New York},
   date={1994},
 }

\bib{cg}{article}{
   author={Cheeger, Jeff},
   author={Gromov, Mikhael},
   title={$L_2$-cohomology and group cohomology},
   journal={Topology},
   volume={25},
   date={1986},
   number={2},
   pages={189--215},
}
		
\bib{Chou}{article}{
   author={Chou, Ching},
   title={Elementary amenable groups},
   journal={Illinois J. Math.},
   volume={24},
   date={1980},
   number={3},
   pages={396--407},
   }

\bib{pmcohn}{book}{
   author={Cohn, Paul M.},
   title={Free ideal rings and localization in general rings},
   series={New Mathematical Monographs},
   volume={3},
   publisher={Cambridge University Press},
   place={Cambridge},
   date={2006},
   pages={xxii+572},
}


\bib{Deninger}{article}{
   author={Deninger, Christopher},
   title={Fuglede-Kadison determinants and entropy for actions of discrete
   amenable groups},
   journal={J. Amer. Math. Soc.},
   volume={19},
   date={2006},
   number={3},
   pages={737--758},

}

\bib{Deninger-Schmidt}{article}{
   author={Deninger, Christopher},
   author={Schmidt, Klaus},
   title={Expansive algebraic actions of discrete residually finite amenable
   groups and their entropy},
   journal={Ergodic Theory Dynam. Systems},
   volume={27},
   date={2007},
   number={3},
   pages={769--786},
}

\bib{Einsiedler}{article}{
   author={Einsiedler, Manfred},
   title={A generalisation of Mahler measure and its application in
   algebraic dynamical systems},
   journal={Acta Arith.},
   volume={88},
   date={1999},
   number={1},
   pages={15--29},
 }

 \bib{EEW}{article}{
   author={Einsiedler, Manfred},
   author={Everest, Graham},
   author={Ward, Thomas},
   title={Entropy and the canonical height},
   journal={J. Number Theory},
   volume={91},
   date={2001},
   number={2},
   pages={256--273},
 }
%
%

\bib{MR1952628}{article}{
   author={Elek, G{\'a}bor},
   title={Amenable groups, topological entropy and Betti numbers},
   journal={Israel J. Math.},
   volume={132},
   date={2002},
   pages={315--335},
}

\bib{Elek03a}{article}{
   author={Elek, G{\'a}bor},
   title={The rank of finitely generated modules over group algebras},
   journal={Proc. Amer. Math. Soc.},
   volume={131},
   date={2003},
   number={11},
   pages={3477--3485 (electronic)},
}

%

\bib{Everest}{article}{
   author={Everest, Graham},
   title={On the elliptic analogue of Jensen's formula},
   journal={J. London Math. Soc. (2)},
   volume={59},
   date={1999},
   number={1},
   pages={21--36},
 }

\bib{EF}{article}{
   author={Everest, Graham},
   author={Fhlath{\'u}in, Br{\'{\i}}d N{\'{\i}}},
   title={The elliptic Mahler measure},
   journal={Math. Proc. Cambridge Philos. Soc.},
   volume={120},
   date={1996},
   number={1},
   pages={13--25},
}

 \bib{EW}{book}{
   author={Everest, Graham},
   author={Ward, Thomas},
   title={Heights of polynomials and entropy in algebraic dynamics},
   series={Universitext},
   publisher={Springer-Verlag London Ltd.},
   place={London},
   date={1999},
   }

\bib{MR0422327}{article}{
   author={Farkas, Daniel R.},
   author={Snider, Robert L.},
   title={$K_{0}$ and Noetherian group rings},
   journal={J. Algebra},
   volume={42},
   date={1976},
   number={1},
   pages={192--198},
}

\bib{MR526863}{article}{
   author={Gildenhuys, Dion},
   title={Classification of soluble groups of cohomological dimension two},
   journal={Math. Z.},
   volume={166},
   date={1979},
   number={1},
   pages={21--25},
}


\bib{gromov}{article}{
   author={Gromov, Misha},
   title={Topological invariants of dynamical systems and spaces of
   holomorphic maps. I},
   journal={Math. Phys. Anal. Geom.},
   volume={2},
   date={1999},
   number={4},
   pages={323--415},
}

\bib{Hillman}{article}{
   author={Hillman, Jonathan A.},
   title={Elementary amenable groups and $4$-manifolds with Euler
   characteristic $0$},
   journal={J. Austral. Math. Soc. Ser. A},
   volume={50},
   date={1991},
   number={1},
   pages={160--170},
 }

\bib{MR964842}{article}{
   author={Kropholler, Peter},
   author={Linnell, Peter},
   author={Moody, John},
   title={Applications of a new $K$-theoretic theorem to soluble group
   rings},
   journal={Proc. Amer. Math. Soc.},
   volume={104},
   date={1988},
   number={3},
   pages={675--684},
}

 \bib{Lam}{book}{
   author={Lam, Tsit Yuen},
   title={Lectures on modules and rings},
   series={Graduate Texts in Mathematics},
   volume={189},
   publisher={Springer-Verlag},
   place={New York},
   date={1999},
   pages={xxiv+557},
}

\bib{lawton}{article}{
   author={Lawton, Wayne M.},
   title={A problem of Boyd concerning geometric means of polynomials},
   journal={J. Number Theory},
   volume={16},
   date={1983},
   number={3},
   pages={356--362},
}

\bib{li}{article}{
   author={Li, Hanfeng},
   title={Compact group automorphisms, addition formulas and Fuglede-Kadison
   determinants},
   journal={Ann. of Math. (2)},
   volume={176},
   date={2012},
   number={1},
   pages={303--347},
}

\bib{liprep}{article}{
   author={Li, Hanfeng},
   author={Liang, Bingbing},
   title={Mean dimension and von Neumann-L\"{u}ck dimension},
   journal={Oberwolfach Reports},
   volume={52},
   date={2012},
}

\bib{lithom}{article}{
   author={Li, Hanfeng},
   author={Thom, Andreas},
   title={Entropy, Determinants, and $\ell^2$-Torsion},
   status={arXiv-preprint}
}

\bib{MR961739}{article}{
   author={Lind, Douglas},
   author={Ward, Thomas},
   title={Automorphisms of solenoids and $p$-adic entropy},
   journal={Ergodic Theory Dynam. Systems},
   volume={8},
   date={1988},
   number={3},
   pages={411--419},
}

\bib{MR1062797}{article}{
   author={Lind, Douglas},
   author={Schmidt, Klaus},
   author={Ward, Thomas},
   title={Mahler measure and entropy for commuting automorphisms of compact
   groups},
   journal={Invent. Math.},
   volume={101},
   date={1990},
   number={3},
   pages={593--629},
}

\bib{LW}{article}{
   author={Lindenstrauss, Elon},
   author={Weiss, Benjamin},
   title={Mean topological dimension},
   journal={Israel J. Math.},
   volume={115},
   date={2000},
   pages={1--24},
      }

\bib{Luck}{book}{
   author={L{\"u}ck, Wolfgang},
   title={$L^2$-invariants: theory and applications to geometry and
   $K$-theory},
   series={Ergebnisse der Mathematik und ihrer Grenzgebiete. 3. Folge. A
   Series of Modern Surveys in Mathematics [Results in Mathematics and
   Related Areas. 3rd Series. A Series of Modern Surveys in Mathematics]},
   volume={44},
   publisher={Springer-Verlag},
   place={Berlin},
   date={2002},

}


\bib{Noguchi}{article}{
   author={Noguchi, Akio},
   title={Zeros of the Alexander polynomial of knot},
   journal={Osaka J. Math.},
   volume={44},
   date={2007},
   number={3},
   pages={567--577},
 }

\bib{ornsteinweiss}{article}{
author ={Ornstein, Donald},
author ={Benjamin Weiss},
title={Entropy and isomorphism theorems for actions of amenable groups},
journal={Journal d'Analyse Math\'ematique},
volume={48},
date={1987},
pages={1--141},
}

\bib{passman}{book}{
   author={Passman, Donald S.},
   title={The algebraic structure of group rings},
   series={Pure and Applied Mathematics},
   publisher={Wiley-Interscience [John Wiley \& Sons]},
   place={New York},
   date={1977},
   pages={xiv+720},
   isbn={0-471-02272-1},
}


\bib{pet}{article}{
   author={Peters, Justin},
   title={Entropy on discrete abelian groups},
   journal={Adv. in Math.},
   volume={33},
   date={1979},
   number={1},
   pages={1--13},
}

\bib{Schick01}{article}{
   author={Schick, Thomas},
   title={Integrality of $L^2$-Betti numbers},
   journal={Math. Ann.},
   volume={317},
   date={2000},
   number={4},
   pages={727--750},
}

\bib{serre}{article}{
   author={Serre, Jean-Pierre},
   title={Cohomologie des groupes discrets},
   language={French},
   conference={
      title={Prospects in mathematics (Proc. Sympos., Princeton Univ.,
      Princeton, N.J., 1970)},
   },
   book={
      publisher={Princeton Univ. Press},
      place={Princeton, N.J.},
   },
   date={1971},
   pages={77--169. Ann. of Math. Studies, No. 70},
}

\bib{SW}{article}{
   author={Silver, Daniel S.},
   author={Williams, Susan G.},
   title={Torsion numbers of augmented groups with applications to knots and
   links},
   journal={Enseign. Math. (2)},
   volume={48},
   date={2002},
   number={3-4},
   pages={317--343},
 }

\bib{tamari}{article}{
author={Tamari, Dimitri},
title={A refined classification of semi-groups leading to generalized polynomial rings with a generalized degree concept.},
journal={Proceedings of the ICM, Amsterdam},
volume={3},
date={1954},
pages={439--440},
}

\bib{MR0214726}{article}{
   author={Yuzvinski{\u\i}, Sergej},
   title={Calculation of the entropy of a group-endomorphism},
   language={Russian},
   journal={Sibirsk. Mat. \u Z.},
   volume={8},
   date={1967},
   pages={230--239},
}

\end{biblist}
\end{bibdiv}

\end{document}